\newcommand{\fa}{\frak{a}}
\newcommand{\fb}{\frak{b}}
\newcommand{\fc}{\frak{c}}
\newcommand{\fm}{\frak{m}}
\newcommand{\fq}{\frak{q}}
\newcommand{\QQ}{\mathbb{Q}}
\def\im{\operatorname{im}}
\def\ker{\operatorname{ker}}
\def\dirlim{\varinjlim}
\def\fm{\mathfrak m}
\def\Hom{\operatorname{Hom}}
\def\Ext{\operatorname{Ext}}
\newcommand{\Soc}{\operatorname{Soc}}
\newcommand{\ann}{\operatorname{ann}}
\numberwithin{equation}{section}
\theoremstyle{plain}
\newtheorem{thm}[equation]{Theorem}          \newtheorem*{thm*}{Theorem}
\newtheorem{prp}[equation]{Proposition}      \newtheorem*{prp*}{Proposition}
\newtheorem{cor}[equation]{Corollary}        \newtheorem*{cor*}{Corollary}
\newtheorem{lem}[equation]{Lemma}            \newtheorem*{lem*}{Lemma}
          \newtheorem*{cnj*}{Conjecture}
            \newtheorem*{fct*}{Fact}
\theoremstyle{definition}
\newtheorem{dfn}[equation]{Definition}       \newtheorem*{dfn*}{Definition}
     \newtheorem*{con*}{Construction}
     \newtheorem*{funcon*}{Functorial Constructions}
      \newtheorem*{obs*}{Observation}
\newtheorem{rmk}[equation]{Remark}           \newtheorem*{rmk*}{Remark}
\newtheorem{exa}[equation]{Example}          \newtheorem*{exa*}{Example}
         \newtheorem*{exe*}{Exercise}
         \newtheorem*{qst*}{Question}
            \newtheorem{stp*}{Setup}
            \newtheorem*{set*}{Setting}
            \newtheorem*{ntn*}{Notation}
\theoremstyle{plain}
\newcommand{\vecu}{\overrightharp{u}}
\newcommand{\vecv}{\overrightharp{v}}
\newcommand{\vecw}{\overrightharp{w}}
\newcommand{\vecz}{\overrightharp{z}}
\newcommand{\vecr}{\overrightharp{r}}
\title[Reducibility of parameter ideals]{Reducibility of parameter ideals in low powers of the maximal ideal} 
\author{Katharine Shultis}
\address[K. Shultis]{Department of Mathematics, Gonzaga University, Spokane, WA 99258, USA}
\email{shultis@gonzaga.edu}
\author{Peder Thompson} 
\address[P. Thompson]{Institutt for matematiske fag, Norwegian University of Science and Technology, N-7491 Trondheim, Norway}
\email{peder.thompson@ntnu.no}
\date{June 8, 2020} 
\subjclass[2020]{13C05, 13D45, 13H10}
\keywords{System of parameters, reducible parameter ideal, Gorenstein ring}
\thanks{This material is based upon work supported by the National Science Foundation under Grant \#1321794, as part of the Mathematical Research Communities 2015 program in Snowbird, Utah.}
\dedicatory{To Roger and Sylvia Wiegand in celebration of their combined 150th birthday.}
\begin{document}
\maketitle

\begin{abstract}
A commutative noetherian local ring $(R,\fm)$ is Gorenstein if and only if every parameter ideal of $R$ is irreducible. Although irreducible parameter ideals may exist in non-Gorenstein rings, Marley, Rogers, and Sakurai show there exists an integer $\ell$ (depending on $R$) such that $R$ is Gorenstein if and only if there exists an irreducible parameter ideal contained in $\fm^\ell$. We give upper bounds for $\ell$ that depend primarily on the existence of certain systems of parameters in low powers of the maximal ideal.
\end{abstract}

\section{Introduction}
\noindent
Let $(R,\fm,k)$ be a commutative noetherian local ring of dimension $\dim R=d$.  It is known that $R$ is Gorenstein if and only if every parameter ideal of $R$ is irreducible, but we cannot characterize Gorenstein rings by the existence of an irreducible parameter ideal. For example, the non-Gorenstein ring $\QQ\llbracket x,y \rrbracket/(x^2,xy)$ has an irreducible parameter ideal $(y)$, although $(y^j)$ is reducible for $j\geq 2$. Marley, Rogers, and Sakurai show \cite{MRS08}, however, that the existence of a parameter ideal in a sufficiently high power of the maximal ideal {\em does} characterize Gorenstein rings:

\begin{thm}[Marley, Rogers, and Sakurai]\label{MRS08}
There exists an integer $\ell$, depending on $R$, such that $R$ is Gorenstein if and only if some parameter ideal contained in $\fm^\ell$ is irreducible.
\end{thm}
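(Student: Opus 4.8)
The plan is to concentrate all the content into a single stability statement for the index of reducibility and then read off the biconditional numerically. Recall first that a parameter ideal $Q$ is irreducible exactly when the Artinian ring $R/Q$ has one-dimensional socle, i.e. $\dim_k\Soc(R/Q)=\dim_k\Hom_R(k,R/Q)=1$. The forward implication of the theorem is then immediate and needs no choice of $\ell$: if $R$ is Gorenstein it is Cohen--Macaulay of type one, and for a Cohen--Macaulay ring every parameter ideal $Q$ has $\dim_k\Soc(R/Q)$ equal to the type $r(R)=\dim_k\Ext_R^d(k,R)$, hence to $1$ in the Gorenstein case; so every $Q\subseteq\fm^\ell$ (and such deep parameter ideals certainly exist, e.g. $(x_1^n,\dots,x_d^n)$ for a system of parameters $x_i$ and $n\gg 0$) is irreducible. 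Thus the real content is the converse, for which I would set $r_i:=\dim_k\Soc\H^i_\fm(R)$ and $s(R):=\sum_{i=0}^d\binom{d}{i}r_i$. Since $\H^d_\fm(R)$ is nonzero and Artinian we have $r_d\ge 1$, and since a nonzero Artinian module has nonzero socle, $r_i=0$ iff $\H^i_\fm(R)=0$; hence $s(R)=1$ forces $\H^i_\fm(R)=0$ for $i<d$ and $r_d=1$, i.e. $R$ Cohen--Macaulay of type one, i.e. Gorenstein. Equivalently, $R$ is \emph{non}-Gorenstein precisely when $s(R)\ge 2$.

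The key claim I would isolate is that there exists $\ell$ with
\[
\dim_k\Soc(R/Q)=s(R)\qquad\text{for every parameter ideal }Q\subseteq\fm^\ell .
\]
Granting this, the converse follows: if some deep $Q$ is irreducible then $s(R)=\dim_k\Soc(R/Q)=1$, so $R$ is Gorenstein, and the same $\ell$ serves in both directions. To prove the displayed formula I would pass to the completion (which changes none of $\dim R$, the rings $R/Q$, the local cohomology $\H^i_\fm(R)$, or the Gorenstein property) so as to have local duality available, and then induct on $d=\dim R$. The case $d=0$ is the tautology $\dim_k\Soc R=\dim_k\Soc\H^0_\fm(R)$. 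For $d\ge 1$, I would pick $x_1\in\fm^\ell$ filter-regular (so $(0:_Rx_1)$ has finite length) and extend it to a deep system of parameters $x_1,\dots,x_d$; with $\bar R=R/x_1R$ and $\bar Q=(x_2,\dots,x_d)\bar R\subseteq\fm_{\bar R}^{\,\ell}$ one has $R/Q=\bar R/\bar Q$, so the two socles literally agree. By the inductive hypothesis it then suffices to prove the invariance $s(\bar R)=s(R)$ under such a hyperplane section.

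By Pascal's rule, $s(\bar R)=\sum_{i=0}^{d-1}\binom{d-1}{i}r_i(\bar R)$ equals $s(R)=\sum_{i=0}^{d}\binom{d}{i}r_i(R)$ precisely when $r_i(\bar R)=r_i(R)+r_{i+1}(R)$ for $0\le i\le d-1$. These relations I would read off from the long exact local cohomology sequence of $0\to(0:_Rx_1)\to R\xrightarrow{x_1}R\to\bar R\to 0$: since $(0:_Rx_1)$ has finite length its higher local cohomology vanishes, and one extracts short exact sequences
\[
0\to\H^i_\fm(R)/x_1\H^i_\fm(R)\to\H^i_\fm(\bar R)\to\ker\!\big(x_1\mid\H^{i+1}_\fm(R)\big)\to 0 .
\]
Applying $\Hom_R(k,-)$ and using that the socle of a submodule is its intersection with the ambient socle, the right-hand term contributes exactly $\Soc\H^{i+1}_\fm(R)$, of dimension $r_{i+1}(R)$. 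So the relation reduces to two points: (i) $\dim_k\Soc\big(\H^i_\fm(R)/x_1\H^i_\fm(R)\big)=r_i(R)$, and (ii) the connecting map does not enlarge the socle, i.e. $\Hom_R(k,-)$ remains right exact on the displayed sequence.

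The main obstacle is point (i). If $\H^i_\fm(R)$ had finite length for $i<d$ one could simply take $\ell$ large enough that $x_1$ annihilates it, giving (i) at once; but these modules need not have finite length (equivalently $R$ may fail to be Cohen--Macaulay on the punctured spectrum), so no single deep element kills them. Dualizing, (i) asks that the finitely generated Matlis dual $N_i=\Hom_R(\H^i_\fm(R),E(k))$ satisfy $\mu_R\big((0:_{N_i}x_1)\big)=\mu_R(N_i)$, and this fails for a careless choice of $x_1$; one must instead choose $x_1$ deep \emph{and} filter-regular—so that it behaves like a member of a standard system of parameters—and prove a genuine stabilization statement, namely that once $\ell$ exceeds a bound manufactured from the finitely many modules $N_0,\dots,N_{d-1}$ the relevant socle dimensions no longer depend on the deep element. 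Establishing this stabilization, and arranging it uniformly over \emph{all} deep parameter ideals $Q$ rather than a single chosen filter-regular system so that one honest $\ell$ works in the displayed formula, is where the real work lies; the completion reduction, the inductive bookkeeping, and the final numerical deduction are then routine.
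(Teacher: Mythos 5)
The forward direction and the final numerical deduction are fine, but your proposal hinges entirely on the displayed claim that $\dim_k\Soc(R/Q)=\sum_{i=0}^d\binom{d}{i}\dim_k\Soc H_\fm^i(R)$ for \emph{every} parameter ideal $Q\subseteq\fm^\ell$, and that claim is not proved: you explicitly defer point (i), the exactness in point (ii), and the uniformity over all deep $Q$ to ``where the real work lies.'' This is not a small deferral. What you are asserting is the constancy of the index of reducibility of deep parameter ideals; it is a theorem for rings with finite local cohomologies (Goto--Sakurai, and the splitting theorem of \cite{CQ11}, which is precisely what your point (ii) --- right-exactness of $\Hom_R(k,-)$ on the extracted short exact sequences --- amounts to), but in general it is substantially harder than, and strictly stronger than, the statement being proved. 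Moreover the induction as set up cannot close: $\bar R$ depends on the choice of $x_1$, hence on $Q$, so the inductive hypothesis must be invoked with a single $\ell$ that works uniformly for infinitely many hyperplane sections, and nothing in the sketch produces such a bound.

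The theorem does not need any of this. The argument of Marley--Rogers--Sakurai, which the paper cites and reruns in the proof of Theorem \ref{Gor_char_higherdim}, needs only surjectivity of $\Soc(R/Q)\to\Soc H_\fm^d(R)$ for deep $Q$. That is soft: $\Soc H_\fm^d(R)\cong\dirlim_i\Soc\Ext_R^d(R/\fm^i,R)$ is finitely generated, so the maps of the direct system are eventually surjective; take $\ell$ to be the point of surjectivity and transfer it to the system $\{R/(y_1^j,\dots,y_d^j)\}$ by the determinant maps of Remark \ref{detmaps}. Irreducibility of $Q=(y_1,\dots,y_d)$ then gives $1=\dim_k\Soc(R/Q)\geq\dim_k\Soc H_\fm^d(R)\geq 1$, so the kernel of $R/Q\to H_\fm^d(R)$ has zero socle, hence is zero; by \cite[Proposition 2.3]{MRS08} this forces $y_1,\dots,y_d$ to be a regular sequence, so $R$ is Cohen--Macaulay of type one, i.e.\ Gorenstein. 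Your route replaces this one finiteness observation plus one lemma with an exact computation of the socle dimension of every deep parameter ideal, and leaves that computation unestablished; as written there is a genuine gap.
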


The integer $\ell$ in Theorem \ref{MRS08}, considered previously by Goto and Sakurai \cite[Lemma 3.12]{GS03} may be taken to be the least integer $i$ such that the canonical map
\begin{align}\label{can_map}
\xymatrix{\Ext_R^d(R/\fm^i,R)\ar[r]& \dirlim_j\Ext_R^d(R/\fm^j,R)\cong H_\fm^d(R)}
\end{align}
becomes surjective after applying the socle functor $\Hom_R(R/\fm,-)$. The existence of such an integer is guaranteed as the socle module $\Hom_R(R/\fm,H_\fm^d(R))$ is finitely generated, but determining how large $\ell$ must be seems to be somewhat subtle. Indeed, we show in Example \ref{exampledim1} that for each integer $a\geq 1$, there exists a ring which requires $\ell>a$. To understand how deep in the maximal ideal one must go before detecting whether $R$ is Gorenstein in terms of reducibility of parameter ideals, we consider the problem, posed to the authors by Marley, of finding an upper bound for the integer $\ell$ in Theorem \ref{MRS08}.

For rings of dimension one, we take a direct approach to determine surjectivity of the maps 
in \eqref{can_map} after applying $\Hom_R(R/\fm,-)$. We show (see Theorem \ref{main}):
\begin{thm}\label{intro1} 
Assume $\dim R=1$ and $k$ is infinite. If $n$ is the least integer such that $\fm^{n}=(x)\fm^{n-1}$ for some parameter $x$ and $\fm^{n-1}\cap \Gamma_\fm(R)=0$, then for $i\geq n$ the canonical map in \eqref{can_map} becomes surjective after applying $\Hom_R(R/\fm,-)$.
\end{thm}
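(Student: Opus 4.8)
The plan is to compute everything explicitly through the parameter $x$. Since $\dim R=1$ and $x$ is a parameter we have $\sqrt{(x)}=\fm$, so $H_\fm^d(R)=H_{(x)}^1(R)$ may be computed from the \v{C}ech complex $R\to R_x$; thus $\Gamma_\fm(R)=\ker(R\to R_x)$ and $H_\fm^1(R)\cong R_x/\im(R\to R_x)$. Writing $\bar R=\im(R\to R_x)\cong R/\Gamma_\fm(R)$, applying $\Hom_R(-,R)$ to $0\to\fm^i\to R\to R/\fm^i\to 0$ gives $\Ext_R^1(R/\fm^i,R)\cong\Hom_R(\fm^i,R)/\{\text{homotheties}\}$, and one checks that under these identifications the canonical map of \eqref{can_map} sends the class of $\phi\colon\fm^i\to R$ to $[\phi(x^i)/x^i]\in R_x/\bar R$; this assignment is compatible with the transition maps $\phi\mapsto\phi|_{\fm^{i+1}}$ since $\phi(x^{i+1})/x^{i+1}=x\phi(x^i)/x^{i+1}=\phi(x^i)/x^i$. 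First I would record that $\Hom_R(R/\fm,H_\fm^1(R))=\{\xi\in R_x:\fm\xi\subseteq\bar R\}/\bar R$, so the statement reduces to the following: for $i\ge n$, every $\xi\in R_x$ with $\fm\xi\subseteq\bar R$ is of the form $\phi(x^i)/x^i$ for some $\phi\in\Hom_R(\fm^i,R)$ representing a socle class, i.e. with $\fm\phi\subseteq\{\text{homotheties}\}$.

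Next I would extract the two consequences of the hypotheses on $n$. Iterating $\fm^n=(x)\fm^{n-1}$ yields $\fm^i=x^{\,i-n+1}\fm^{n-1}$ for all $i\ge n-1$, so multiplication by $x^{\,i-n+1}$ maps $\fm^{n-1}$ onto $\fm^i$. Since $\fm^N\subseteq(x)$ for some $N$, any element killed by a power of $x$ is $\fm$-torsion, so $(0:_R x^{\,i-n+1})\subseteq\Gamma_\fm(R)$; hence the hypothesis $\fm^{n-1}\cap\Gamma_\fm(R)=0$ forces this multiplication to be \emph{injective}, giving an isomorphism $x^{\,i-n+1}\colon\fm^{n-1}\xrightarrow{\ \sim\ }\fm^i$. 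The same hypothesis gives $\fm^{n-1}\Gamma_\fm(R)\subseteq\fm^{n-1}\cap\Gamma_\fm(R)=0$. These two facts are exactly the inputs the construction needs.

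With this in hand I would construct the required $\phi$. Fix $\xi$ with $\fm\xi\subseteq\bar R$ and set $e=i-n+1\ge 1$. Since $x^e\in\fm$, we have $x^e\xi\in\fm\xi\subseteq\bar R$, so choose $c\in R$ whose image in $R_x$ is $x^e\xi$, and define $\phi\colon\fm^i\to R$ by $\phi(x^eb)=bc$ for $b\in\fm^{n-1}$; this is well defined and $R$-linear precisely because $x^e\colon\fm^{n-1}\to\fm^i$ is an isomorphism. A direct check gives $\phi(x^i)/x^i=x^{n-1}c/x^i=c/x^e=\xi$ in $R_x$. Finally, for $y\in\fm$ pick $s\in R$ with image $y\xi\in\bar R$; then $yc$ and $x^es$ have the same image in $R_x$, so $yc-x^es\in\Gamma_\fm(R)$, whence $(yc-x^es)\fm^{n-1}\subseteq\Gamma_\fm(R)\fm^{n-1}=0$. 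Therefore $y\phi(x^eb)=ybc=sx^eb$ for all $b\in\fm^{n-1}$, i.e. $y\phi$ is the homothety $\lambda_s$; thus $\phi$ represents a socle class and maps to $\xi$, proving surjectivity.

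The main obstacle, and the step I would be most careful about, is the identification in the first paragraph: confirming that the canonical map of \eqref{can_map} really is $[\phi]\mapsto[\phi(x^i)/x^i]$ under the \v{C}ech description of $H_\fm^1(R)$, since this is what licenses the explicit computation, and that the constructed $\phi$ genuinely lands in $R$ rather than merely in $R/\Gamma_\fm(R)$. This lifting problem is where the $\fm$-torsion could obstruct the argument, and it is resolved exactly by the isomorphism $x^e\colon\fm^{n-1}\xrightarrow{\ \sim\ }\fm^i$ together with $\fm^{n-1}\Gamma_\fm(R)=0$; away from these two inputs the construction would only produce a homomorphism into $R/\Gamma_\fm(R)$, and only a socle class up to torsion.
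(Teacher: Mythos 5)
Your argument is correct, but it takes a genuinely different route from the paper's proof of this statement (Theorem \ref{main}). The paper chooses a minimal generating set $u_1,\dots,u_e$ of $\fm^{n-1}$, shows via Lemmas \ref{parameter_lemma} and \ref{mingens_lemma} that $x^iu_1,\dots,x^iu_e$ minimally generates $\fm^{n-1+i}$, builds free resolutions of the modules $R/\fm^{n-1+i}$ sharing a common tail, presents $\Ext^1_R(R/\fm^{n-1+i},R)$ as $\ker(A^T)/\im(x^i\vecu^T)$ with transition maps given by multiplication by $x$, and then proves surjectivity on socles by a descending induction that lifts a given socle element from stage $p$ of the direct system to stage $p-1$. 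You instead present $\Ext_R^1(R/\fm^i,R)$ as $\Hom_R(\fm^i,R)$ modulo homotheties, model $H_\fm^1(R)$ as $R_x/\overline{R}$ via the \v{C}ech complex, and write down a preimage of an arbitrary socle element in a single step. Both arguments ultimately rest on the same two consequences of the choice of $n$: multiplication by $x^{i-n+1}$ is an isomorphism $\fm^{n-1}\to\fm^{i}$, and $\fm^{n-1}\Gamma_\fm(R)=0$; these are exactly what the paper's Lemma \ref{parameter_lemma} delivers. What your route buys is brevity and the elimination of the induction; what it costs is the identification of the canonical map with $[\phi]\mapsto[\phi(x^i)/x^i]$, which you rightly single out as the delicate point. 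For the record, the surjectivity conclusion only requires the induced map $\dirlim_j\Ext^1_R(R/\fm^j,R)\to R_x/\overline{R}$ to be injective (a socle element of the direct limit is then recovered from its image in $R_x/\overline{R}$), and injectivity is quick: if $\phi(x^i)/x^i$ equals the image of some $r\in R$ in $R_x$, then $x^N(\phi(x^i)-rx^i)=0$ for some $N$, so $\phi-\lambda_r$ carries $\fm^{i}$ into $(0:_Rx^{i+N})\subseteq\Gamma_\fm(R)$ and therefore kills $\fm^{i+n-1}$, whence $[\phi]=[\lambda_r]$ in the limit. The paper's matrix presentation has the separate advantage of being reused directly in Examples \ref{exa_main_not_sharp} and \ref{exampledim1}.
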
 
\noindent
Thus, in this setting, $R$ is Gorenstein if and only if some parameter ideal contained in $\fm^{n}$ is irreducible; see Corollary \ref{main_cor}. This consequence of Theorem \ref{intro1} can also be deduced from work of Rogers \cite{Rog04} and Marley, Rogers, and Sakurai \cite{MRS08}. The assumption that $k$ is infinite is only needed to ensure the existence of a parameter $x$ such that $\fm^n=(x)\fm^{n-1}$.

For a ring $R$ of dimension $d$ (not necessarily equal to $1$) and a system of parameters $x_1,...,x_d$, we instead consider---in place of \eqref{can_map}---the least integer $i$ such that the canonical map
\begin{align}\label{can_map2}
\xymatrix{R/(x_1^i,...,x_d^i)\ar[r] & \dirlim_j R/(x_1^j,...,x_d^j)\cong H_\fm^d(R)}
\end{align}
becomes surjective after applying $\Hom_R(R/\fm,-)$. We focus on the case where $x_1,...,x_d$ is a $p_s$-standard system of parameters (see Definition \ref{ps_standard}); this is a variant of the $p$-standard systems of parameters considered by Cuong \cite{Cuo95}. These systems of parameters (both $p_s$-standard and $p$-standard) are chosen in a way as to annihilate certain local cohomology modules. We show in Proposition \ref{simple_surjection} that if $x_1,...,x_d$ is a $p_s$-standard system of parameters for some $s\geq 2$, then for $i\geq s$ the canonical map in \eqref{can_map2} becomes surjective after applying $\Hom_R(R/\fm,-)$.

As a consequence of this surjectivity, we obtain a characterization of the Gorenstein property of $R$ in terms of irreducibility of parameter ideals: the integer $\ell$ from Theorem \ref{MRS08} can be taken to be the integer $n$ in the next result. In particular, we prove (as a special case of Theorem \ref{Gor_char_higherdim}):
\begin{thm}\label{intro2}
Assume $R$ has a dualizing complex. If $n$ is an integer such that $\fm^n\subseteq (x_1^{2},...,x_d^{2})$ for a $p_2$-standard system of parameters $x_1,...,x_d$, then $R$ is Gorenstein if and only if some parameter ideal contained in $\fm^n$ is irreducible.
\end{thm}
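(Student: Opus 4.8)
The plan is to reduce the statement to Theorem~\ref{MRS08}. That result gives the equivalence once we know $n\geq\ell$, where $\ell$ is the least integer for which \eqref{can_map} becomes surjective after applying $\Hom_R(R/\fm,-)$: indeed, if $n\geq\ell$ then $\fm^n\subseteq\fm^\ell$, and both implications follow from Theorem~\ref{MRS08} (the forward one also using that $\fm^n$ always contains parameter ideals, e.g.\ high powers of $x_1,\dots,x_d$, and that every parameter ideal of a Gorenstein ring is irreducible). So the entire problem reduces to showing that the canonical map \eqref{can_map} is surjective on socles at level $i=n$. The advantage of routing through Theorem~\ref{MRS08} is that it handles the \emph{arbitrary} irreducible parameter ideal contained in $\fm^n$, so that I need only produce the numerical bound and never have to re-derive Cohen--Macaulayness and type one by hand.

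Write $\fq=(x_1^2,\dots,x_d^2)$. The first step is to replace $\fm^n$ by $\fq$: the containment $\fm^n\subseteq\fq$ gives a surjection $R/\fm^n\twoheadrightarrow R/\fq$, hence a map $\Ext_R^d(R/\fq,R)\to\Ext_R^d(R/\fm^n,R)$ commuting with the structure maps into $H_\fm^d(R)$. Since this map carries socles to socles and respects the maps to $H_\fm^d(R)$, it suffices to prove that the $\Ext$ structure map $\Ext_R^d(R/\fq,R)\to H_\fm^d(R)$ is surjective on socles. The second step connects this $\Ext$-presentation of local cohomology to the presentation by powers of the parameters used in \eqref{can_map2}: the top Koszul cohomology on $x_1^2,\dots,x_d^2$ is $R/\fq$, and comparing the Koszul complex with a projective resolution of $R/\fq$ yields a natural map $\Ext_R^d(R/\fq,R)\to R/\fq$ whose composite with the canonical map \eqref{can_map2} at level $2$ is precisely the $\Ext$ structure map $\Ext_R^d(R/\fq,R)\to H_\fm^d(R)$. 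By Proposition~\ref{simple_surjection} (with $s=2$), the map \eqref{can_map2} is surjective on socles at level $2$.

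The main obstacle is exactly this last comparison. The natural map supplied by the Koszul-versus-resolution comparison runs $\Ext_R^d(R/\fq,R)\to R/\fq$ rather than the other way, so the surjectivity of \eqref{can_map2} on socles does not transfer formally to the $\Ext$ structure map; one must show that enough socle classes of $R/\fq$ lift back along this comparison. I expect this is where the two standing hypotheses enter. The $p_2$-standard condition on $x_1,\dots,x_d$ (Definition~\ref{ps_standard}) forces the parameters to annihilate the lower local cohomology, which should make the comparison map bijective on socles; and the dualizing complex lets one apply local duality, reinterpreting socle surjectivity as a statement about minimal generators of the canonical module and thereby aligning the directions of the two presentations. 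Once the comparison is shown to be surjective on socles, the $\Ext$ structure map at $\fq$ is surjective on socles, hence so is \eqref{can_map} at level $n$; thus $n\geq\ell$ and the theorem follows from Theorem~\ref{MRS08}.
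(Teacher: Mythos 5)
Your overall reduction---establish that the map in \eqref{can_map} is surjective on socles at level $n$ and then invoke Theorem \ref{MRS08} in the form of \cite[Theorem 2.7]{MRS08}---is a legitimate strategy in principle, but the step on which everything hinges is exactly the one you leave open, and it is not a routine verification. You correctly observe that the Koszul-versus-resolution comparison gives a map $\Ext_R^d(R/\fq,R)\to R/\fq$ pointing the wrong way, so that the socle-surjectivity of \eqref{can_map2} supplied by Proposition \ref{simple_surjection} does not transfer formally to the $\Ext$ structure map; your suggestion that the $p_2$-standard hypothesis and local duality ``should'' make the comparison surjective on socles is a hope, not an argument. In fact the paper explicitly flags this as the obstruction it does not resolve: the opening of Section \ref{section_higherdim} states that it is unclear whether the socle of \eqref{can_map} can be shown surjective when $\dim R>1$ (citing obstructions of the kind in \cite[Discussion 4.5]{FH11}), and the entire point of working with \eqref{can_map2} is to sidestep that question. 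So your proposal, as written, reduces the theorem to a statement that the paper neither proves nor claims, and leaves it unproved.

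The paper's route avoids the $\Ext$ system entirely. Since $\fm^n\subseteq(x_1^2,\dots,x_d^2)$, any parameter ideal contained in $\fm^n$ is contained in $(x_1^2,\dots,x_d^2)$, so Theorem \ref{intro2} is an immediate special case of Theorem \ref{Gor_char_higherdim} with $s=2$. That theorem takes an arbitrary irreducible parameter ideal $(y_1,\dots,y_d)\subseteq(x_1^2,\dots,x_d^2)$ and proves directly that the canonical map $\Soc R/(y_1,\dots,y_d)\to\Soc H_\fm^d(R)$ is surjective, by interleaving the direct systems of powers of the $x_i$ and of the $y_i$ and comparing them via the determinant maps of Remark \ref{detmaps}; surjectivity then transfers from $\Soc R/(x_1^2,\dots,x_d^2)\to\Soc H_\fm^d(R)$ (Proposition \ref{simple_surjection}) across a commutative square whose bottom row is an isomorphism. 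From there the argument of \cite[Theorem 2.7]{MRS08} finishes: irreducibility forces $\dim_{R/\fm}\Soc R/(y_1,\dots,y_d)=1$, hence the kernel of the canonical map has zero socle, hence $y_1,\dots,y_d$ is a regular sequence and $R$ is Cohen--Macaulay of type one. To salvage your approach you would need either to actually prove the socle lifting along $\Ext_R^d(R/\fq,R)\to R/\fq$, or to replace the appeal to \eqref{can_map} by the transfer-to-$(y_1,\dots,y_d)$ argument just described.
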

\noindent
The assumption that $R$ has a dualizing complex is sufficient for the existence of a $p_2$-standard system of parameters; see the discussion before Definition \ref{ps_standard}. Moreover, work of Cuong and Cuong \cite{CC07,CC17} implies that Theorem \ref{intro2} holds if one replaces the assumption that $R$ has a dualizing complex with the assumption that $R$ is a quotient of a Cohen-Macaulay local ring; see Remark \ref{CuongCuong}.
\begin{equation*}
  * \ \ * \ \ *
\end{equation*}
\noindent
Throughout this paper, let $(R,\fm,k)$ be a commutative noetherian local ring.  Let $\dim R=d$ be the Krull dimension of $R$.  We briefly recall a few facts and notation. 

For an $R$-module $M$, submodule $N\subseteq M$, and ideal $\fa\subseteq R$, we consider the submodule $(N:_M\fa)=\{y\in M \mid \fa y\subseteq N\}$ of $M$. If $\fa=(x)$, just write $(N:_Mx)$. The \emph{socle} of $M$ is $\Soc M=(0:_M\fm)\cong\Hom_R(R/\fm,M)$. The \emph{annihilator} of $M$ is $\ann_RM=(0:_RM)$.

For an ideal $\fa$ of $R$, denote by $\Gamma_\fa(-)$ the $\fa$-torsion functor; its right derived functors yield the usual local cohomology functors, denoted $H_\fa^i(-)$ for $i\geq 0$; for additional background on local cohomology, refer to \cite{BS13,BH98,24hours}. 

A \emph{system of parameters of $R$} is a sequence of elements $x_1,...,x_d$ in $R$ such that $\fm^i\subseteq (x_1,...,x_d)$ for some integer $i$. More generally, if $M$ is an $R$-module with $\dim M=t$, then a sequence $x_1,...,x_t$ in $R$ is a \emph{system of parameters of $M$} if $M/(x_1,...,x_t)M$ has finite length. In either case, an element of a system of parameters is called a \emph{parameter} and an ideal generated by a system of parameters is a \emph{parameter ideal}. For additional facts about systems of parameters, refer to \cite{Eis95,Mat89}.

\section{A bound in dimension one}\label{section_dim1}
\noindent
Assume in this section that the ring $(R,\fm,k)$ has an infinite residue field $k$ and $\dim(R)=1$. Moreover, we fix the next two invariants throughout this section; the first is finite because $k$ is infinite\footnote{For the purposes of this paper, the assumption of $k$ being an infinite field may be replaced with the assumption that a parameter $x$ exists so that $\fm^{i+1}=(x)\fm^i$ for some integer $i$.} \cite[Corollary 4.6.10]{BH98}, the second is finite because $\Gamma_\fm(R)$ has finite length \cite[Theorem 7.1.3]{BS13}:
\begin{align}\label{c_and_g}
\begin{aligned}
c&=\inf\{i \mid \text{there exists a parameter $x$ of $R$ such that $\fm^{i+1}=(x)\fm^i$}\};\\
g&=\inf\{i \mid \fm^i\cap \Gamma_\fm(R)=0\}.
\end{aligned}
\end{align}
These invariants have been considered elsewhere; $c$ is the reduction number of $\fm$, and the bound we consider below, $\max\{c,g\}+1$, is used by Rogers \cite[Theorem 2.3]{Rog04}. We begin with two elementary lemmas involving these invariants:
\begin{lem}\label{parameter_lemma}
Let $x\in R$ be a parameter and $y\in \fm^g$. If $x^iy=0$ for some $i\geq 1$, then $y=0$.
\end{lem}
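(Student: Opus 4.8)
The plan is to show that the annihilator condition $x^iy=0$ forces $y$ into the $\fm$-torsion submodule $\Gamma_\fm(R)$, after which the hypothesis $y\in\fm^g$ together with the defining property of $g$ in \eqref{c_and_g} immediately yields $y=0$. In other words, I would reduce the lemma to the single observation that $(0:_R x^i)\subseteq\Gamma_\fm(R)$ for any parameter $x$, and then intersect with $\fm^g$.

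First I would use that $x$ is a parameter of the one-dimensional ring $R$: this means $R/(x)$ has finite length, so $\fm^n\subseteq(x)$ for some integer $n\geq 1$. Raising both sides to the $i$-th power gives $\fm^{ni}=(\fm^n)^i\subseteq(x)^i=(x^i)$. Next, the hypothesis $x^iy=0$ gives $(x^i)y=0$, since a typical element of $(x^i)$ has the form $x^ir$ and $x^iry=r(x^iy)=0$. Combining these two facts yields $\fm^{ni}y\subseteq(x^i)y=0$, so $y$ is annihilated by a power of $\fm$ and therefore lies in $\Gamma_\fm(R)$.

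Finally, since $y\in\fm^g$ by assumption and $y\in\Gamma_\fm(R)$ by the previous step, we have $y\in\fm^g\cap\Gamma_\fm(R)$, and this intersection is zero by the definition of $g$; hence $y=0$. The argument is elementary throughout, and the only step that genuinely uses the hypotheses in an essential way is the passage from $x^iy=0$ to $y\in\Gamma_\fm(R)$: it is precisely the parameter property of $x$ (via $\fm^{ni}\subseteq(x^i)$) that converts an annihilation by $x^i$ into annihilation by a power of $\fm$, thereby linking the given condition to the invariant $g$.
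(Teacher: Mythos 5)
Your argument is correct and is essentially identical to the paper's proof: both deduce $\fm^{ni}\subseteq(x^i)$ from $\fm^n\subseteq(x)$, conclude $\fm^{ni}y=0$ so that $y\in\Gamma_\fm(R)$, and finish by intersecting with $\fm^g$ using the definition of $g$. No issues.
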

\begin{proof}
As $(x)$ is a parameter ideal, there exists an integer $j$ such that $\fm^j\subseteq (x)$ hence $\fm^{ij}\subseteq (x^i)$ for $i\geq 1$. If $x^iy=0$, then $\fm^{ij}y=0$. It thus follows that $y\in \fm^g\cap  (0:_R\fm^{ij})\subseteq \fm^g \cap \Gamma_\fm(R)=0$, hence $y=0$.
\end{proof}

\begin{lem}\label{mingens_lemma}
Let $x\in R$ be a parameter with $\fm^{c+1}=(x)\fm^c$ and set $n=\max\{c,g\}$. If $\{y_1,...,y_e\}$ is a minimal generating set of $\fm^n$, then $\{x^iy_1,...,x^iy_e\}$ is a minimal generating set of $\fm^{n+i}$ for each $i\geq 1$.
\end{lem}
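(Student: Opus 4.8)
The plan is to split the statement into its two halves---that the set generates $\fm^{n+i}$, and that it does so minimally---with essentially all the content lying in the second. The first half rests on propagating the reduction equation $\fm^{c+1}=(x)\fm^c$ upward. Since $n\geq c$, multiplying both sides by $\fm^{n-c}$ gives $\fm^{n+1}=(x)\fm^n$, and because $n+j\geq c$ for every $j\geq 0$ this equation iterates to $\fm^{n+i}=(x^i)\fm^n$ for all $i\geq 0$. Given $\fm^n=(y_1,\dots,y_e)$, this at once yields $\fm^{n+i}=(x^iy_1,\dots,x^iy_e)$, so the proposed set generates $\fm^{n+i}$. What remains is to rule out redundancy.

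For minimality I would show that the images $\overline{x^iy_1},\dots,\overline{x^iy_e}$ in $\fm^{n+i}/\fm^{n+i+1}$ are linearly independent over $k$; as they already span this quotient, independence forces $\dim_k(\fm^{n+i}/\fm^{n+i+1})=e$, which is exactly minimality. So I suppose $\sum_j c_j x^i y_j\in\fm^{n+i+1}$ with $c_j\in R$ and aim to conclude each $c_j\in\fm$. Using the propagated equation in the form $\fm^{n+i+1}=(x^{i+1})\fm^n$, I write the right-hand side as $x^{i+1}\sum_j b_j y_j$ for suitable $b_j\in R$. Rearranging and factoring out $x^i$ produces the single equation $x^i w=0$, where $w=\sum_j (c_j-x b_j)y_j$ is an element of $\fm^n$.

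The crux of the argument---and the step I expect to be the main obstacle---is legitimately cancelling the factor $x^i$, since $x^iw=0$ does not in general force $w=0$. This is precisely where the invariant $g$ enters through the inequality $n\geq g$, so that $w\in\fm^n\subseteq\fm^g$: Lemma \ref{parameter_lemma} then applies to give $w=0$. Consequently $\sum_j(c_j-xb_j)y_j=0$ is a relation among the minimal generators $y_1,\dots,y_e$ of $\fm^n$, and since the kernel of a minimal presentation $R^e\twoheadrightarrow\fm^n$ lies inside $\fm R^e$ (Nakayama), every coefficient $c_j-xb_j$ lies in $\fm$; as $x\in\fm$ this forces $c_j\in\fm$. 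Finally, lifting any purported $k$-linear dependence among the $\overline{x^iy_j}$ to an equation of the above form shows all the $k$-coefficients vanish, establishing the desired independence and completing the proof.
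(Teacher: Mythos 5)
Your proof is correct and follows essentially the same route as the paper: propagate $\fm^{c+1}=(x)\fm^c$ to get $\fm^{n+i}=(x^i)\fm^n$, then cancel the factor $x^i$ from a relation via Lemma \ref{parameter_lemma}, using $n\geq g$ so that the relevant element lies in $\fm^g$. The only cosmetic difference is that you phrase minimality as linear independence in $\fm^{n+i}/\fm^{n+i+1}$ plus Nakayama, whereas the paper directly derives a contradiction with the minimality of $\{y_1,\dots,y_e\}$; these are equivalent.
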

\begin{proof}
Let $\{y_1,...,y_e\}$ be a minimal generating set of $\fm^n$. As $n\geq c$, the equality $(x)\fm^n=\fm^{n+1}$ implies $(x^i)\fm^n=\fm^{n+i}$ by induction. Thus $(x^iy_1,...,x^iy_e)=\fm^{n+i}$. If there exists $r_q\in R$ such that $x^iy_j=\sum_{q\not=j} r_q x^iy_q$ for some $j$, then we have $x^i(y_j-\sum_{q\not=j}r_qy_q)=0$. Since $y_j-\sum_{q\not=j}r_qy_q\in \fm^g$, we have $y_j-\sum_{q\not=j}r_qy_q=0$ by Lemma \ref{parameter_lemma}; this contradicts the fact that $y_1,...,y_e$ is a minimal generating set, hence we must have $\{x^iy_1,...,x^iy_e\}$ is a minimal generating set for $\fm^{n+i}$.
\end{proof}

\begin{thm}\label{main}
Assume $k$ is infinite and $\dim(R)=1$.  For $i\geq \max\{c,g\}+1$, the canonical map 
\[\xymatrix{\varphi_i:\Ext_R^1(R/\fm^i,R)\ar[r] &  \dirlim_j \Ext_R^1(R/\fm^j,R)\cong H_\fm^1(R)}\]
becomes surjective after applying $\Soc(-)=\Hom_R(R/\fm,-)$.
\end{thm}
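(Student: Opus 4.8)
The plan is to prove the theorem by \emph{constructing}, for each element of $\Soc H_\fm^1(R)$, an explicit preimage that already lies in the socle of $\Ext_R^1(R/\fm^{n+1},R)$, where $n=\max\{c,g\}$. Since $R/\fm^i\twoheadrightarrow R/\fm^{n+1}$ induces a factorization $\varphi_{n+1}=\varphi_i\circ(\text{transition})$ for every $i\ge n+1$, one has $\im(\Soc\varphi_{n+1})\subseteq\im(\Soc\varphi_i)$, so it suffices to treat $i=n+1$ and the general case follows. Throughout I would fix a parameter $x$ with $\fm^{c+1}=(x)\fm^c$; multiplying by $\fm^{n-c}$ gives $\fm^{n+1}=(x)\fm^n$.

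First I would record two consequences of the standing hypotheses. Because $n\ge g$ and $\fm^n\subseteq\fm^g$, Lemma~\ref{parameter_lemma} shows multiplication by $x$ is injective on $\fm^n$, so $\cdot x\colon\fm^n\to\fm^{n+1}$ is an isomorphism; precomposition then identifies $\Ext_R^1(R/\fm^{n+1},R)\cong\Hom_R(\fm^n,R)/\rho(xR)$, where $\rho\colon R\to\Hom_R(\fm^n,R)$ sends $r$ to multiplication by $r$. Second, since $\fm^g\Gamma_\fm(R)\subseteq\fm^g\cap\Gamma_\fm(R)=0$, I get $(0:_R\fm^n)=\Gamma_\fm(R)$ for all $n\ge g$. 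I would then pass to $\bar R=R/\Gamma_\fm(R)$, a one-dimensional Cohen–Macaulay ring on which $\bar x$ is a nonzerodivisor, and use the standard identifications $H_\fm^1(R)\cong H_\fm^1(\bar R)\cong\bar R_{\bar x}/\bar R$, so that $\Soc H_\fm^1(R)=\Soc(\bar R_{\bar x}/\bar R)$.

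Next I would normalize socle classes over $\bar R$. For the nonzerodivisor $\bar x$ one checks $(\bar x^{m}:_{\bar R}\fm)\subseteq(\bar x^{m-1})$ for every $m\ge1$: if $\fm\bar r\subseteq(\bar x^{m})$ then $\bar x\bar r\in(\bar x^{m})$, whence $\bar r\in(\bar x^{m-1})$. Iterating, every element of $\Soc(\bar R_{\bar x}/\bar R)$ can be written as $[\bar c/\bar x]$ with $\bar c\in\bar R$ satisfying $\fm\bar c\subseteq(\bar x)$. Given such a class $\xi=[\bar c/\bar x]$, I would choose a lift $c\in R$ of $\bar c$ and set $g:=\rho(c)$, i.e.\ multiplication by $c$ on $\fm^n$; the claim is that the corresponding class in $\Ext_R^1(R/\fm^{n+1},R)$ lies in the socle and maps to $\xi$ under $\varphi_{n+1}$.

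Two verifications remain, and the second is the crux and the main obstacle. The canonical map $\varphi_{n+1}$ is given on a representative $f\colon\fm^{n+1}\to R$ by $[f]\mapsto[\,\overline{f(x^{n+1})}/\bar x^{n+1}\,]$; for the $f$ corresponding to $g=\rho(c)$, namely $f(xm)=cm$, this yields $[\,\overline{cx^{n}}/\bar x^{n+1}\,]=[\bar c/\bar x]=\xi$, as desired (pinning down this comparison formula is the first thing I would need to justify carefully). The socle condition $\fm g\subseteq\rho(xR)$ unwinds to $\fm c\subseteq(x)+(0:_R\fm^n)$, and this is exactly where $n\ge g$ enters: the identity $(0:_R\fm^n)=\Gamma_\fm(R)$ turns the requirement into $\fm c\subseteq(x)+\Gamma_\fm(R)$, which is precisely the preimage in $R$ of the condition $\fm\bar c\subseteq(\bar x)$ that $\bar c$ satisfies by construction. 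The heart of the argument is therefore matching the socle relations with the colon condition over $\bar R$; the torsion errors introduced by the socle relations must be annihilated outright by $\fm^n$ (not merely absorbed into $\Gamma_\fm(R)$ after some further power), and it is this requirement that forces the bound to involve $g$ rather than only the reduction number $c$.
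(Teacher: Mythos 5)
Your argument is correct, and it takes a genuinely different route from the paper's. The paper works with explicit free resolutions: Lemma~\ref{parameter_lemma} forces the resolutions of the $R/\fm^{n+i}$ to agree from homological degree one on, so that $\Ext_R^1(R/\fm^{n+i},R)=K/I_i$ with transition maps given by multiplication by $x$, and surjectivity of $\Soc\varphi_{n+1}$ is then obtained by a descending induction that lifts a socle class of $K/I_p$ to a socle class of $K/I_{p-1}$ with the same image in the colimit, the induction being started from the fact that $\Soc H_\fm^1(R)$ is finitely generated so that surjectivity holds for $p\gg 0$. You instead exhibit explicit preimages: you identify $\Ext_R^1(R/\fm^{n+1},R)$ with $\Hom_R(\fm^n,R)/\rho(xR)$ using that $\cdot x\colon \fm^n\to\fm^{n+1}$ is bijective (which rests on $\fm^{n+1}=(x)\fm^n$, i.e.\ $n\geq c$, together with Lemma~\ref{parameter_lemma}), normalize each socle class of $H_\fm^1(R)\cong \bar R_{\bar x}/\bar R$ to the form $[\bar c/\bar x]$ over the Cohen--Macaulay quotient $\bar R=R/\Gamma_\fm(R)$, and check that multiplication by a lift $c$ gives a socle class hitting it; here the identity $(0:_R\fm^n)=\Gamma_\fm(R)$ for $n\geq g$ plays exactly the role that Lemma~\ref{parameter_lemma} plays in the paper's socle verifications. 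Both proofs thus use the two invariants in the same way ($c$ to make $x$ act bijectively on high powers of $\fm$, and $g$ to convert torsion ambiguities into genuine equalities), but your version avoids the descending induction and any appeal to finite generation of $\Soc H_\fm^1(R)$, at the cost of the comparison formula $\varphi_{n+1}([f])=[\,\overline{f(x^{n+1})}/\bar x^{n+1}\,]$ that you rightly flag as the point needing care. That formula does hold: the ideals $(x^j)$ are cofinal among the $\fm$-primary ideals, the induced map $\Ext_R^1(R/\fm^{n+1},R)\to \Ext_R^1(R/(x^{n+1}),R)$ is restriction of homomorphisms, and the standard isomorphism $\dirlim_j\Hom_R((x^j),R)\cong R_x=\bar R_{\bar x}$ sends $h$ to $h(x^j)/x^j$; moreover, since only surjectivity is at stake, any fixed isomorphism $H_\fm^1(R)\cong\bar R_{\bar x}/\bar R$ suffices, so this step is routine once written out. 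With that compatibility supplied, your proof is complete.
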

\begin{proof}
Let $x$ be a parameter such that $(x)\fm^c=\fm^{c+1}$ and set $n=\max\{c,g\}$. Let $\{u_1,...,u_e\}$ be a minimal generating set for $\fm^n$.  By Lemma \ref{mingens_lemma}, we know $\{x^{i}u_1,...,x^{i}u_e\}$ is a minimal generating set for $\fm^{n+i}$ for $i\geq 0$. We will consider $\Soc\Ext_R^1(R/\fm^{n+i},R)$ by examining a projective resolution of $R/\fm^{n+i}$.

We first show that, for $i\geq 0$, one may choose free resolutions of $R/\fm^{n+i}$ which agree starting in degree $1$. Set $\vecu=\begin{bmatrix}u_1\, \cdots \, u_e\end{bmatrix}:R^e\to R$. For $i\geq 0$, the containment $\ker(\vecu)\subseteq \ker(x^{i}\vecu)$ is clear, and hence the equality $\ker(\vecu)= \ker(x^{i}\vecu)$ holds because if $\vecr \in \ker(x^i\vecu)$, then $\vecu\vecr=0$ by Lemma \ref{parameter_lemma}. Thus, for $i\geq 0$, there is a matrix $A:R^f\to R^e$ and a commutative diagram with exact rows:
\[
\xymatrix{
R^f\ar[rr]^{A}\ar[d] && R^e \ar[rr]^{x^{i+1}\vecu}\ar[d]^{x} && R \ar[r]\ar[d]^{=} & R/\fm^{n+i+1} \ar[r]\ar@{->>}[d] & 0\\
R^f\ar[rr]^{A} && R^e \ar[rr]^{x^{i}\vecu} && R \ar[r] & R/\fm^{n+i}\ar[r] &0
}
\]
Applying $\Hom_R(-,R)$ to this diagram yields a commutative diagram:
\begin{align}
\begin{aligned}\label{res_dim1}
\xymatrix{
R \ar[rr]^{x^{i+1}\vecu^T} && R^e\ar[rr]^{A^T}&& R^f\\
R \ar[rr]^{x^i\vecu^T}\ar[u]^{=} && R^e\ar[rr]^{A^T}\ar[u]^{x} && R^f\ar[u]
}
\end{aligned}
\end{align}
Taking cohomology, we obtain that $\Ext_R^1(R/\fm^{n+i},R)= \ker(A^T)/\im(x^i\vecu^T)$ for $i\geq 0$. Set $K=\ker(A^T)\subseteq R^e$, $I_i=\im(x^i\vecu^T)\subseteq K$, and identify
$$\Soc\Ext_R^1(R/\fm^{n+i},R)\cong \Soc(K/I_i)$$
for $i\geq 0$. Moreover, for $i\geq 0$, the map $\Ext_R^1(R/\fm^{n+i},R)\to \Ext_R^1(R/\fm^{n+i+1},R)$ is induced by multiplication by $x$, see \eqref{res_dim1}, as well as is the induced map after applying $\Soc(-)$.  Indeed, for $j\geq 1$, the map $\xymatrix{x^j:\Soc(K/I_i)\ar[r] & \Soc(K/I_{i+j})}$ is defined by $\smash{\xymatrix{
\vecz+I_i \ar@{|->}[r] & x^j\vecz+I_{i+j}}}$.

In order to show that $\Soc \varphi_i$ is surjective for $i\geq n+1$, where $\varphi_i$ is as in the statement, it will be enough to show that $\Soc \varphi_{n+1}$ is surjective (this follows from the definition of direct systems). Note that for $\vecv+I_i\in \Soc K/I_i$, the function $\Soc \varphi_i$ is induced by $\varphi_i$ and hence $(\Soc\varphi_i)(\vecv+I_i)=\varphi_i(\vecv+I_i)$.  Hereafter, we use the latter notation.

Let $\sigma\in \dirlim_j\Soc K/I_j$. As $\dirlim_j \Soc K/I_j$ ($\cong \Soc H_\fm^d(R)$) is finitely generated, $\Soc\varphi_i$ is surjective for $i\gg0$, thus $\sigma=\varphi_{n+p}(\vecv+I_{p})$ for some $\vecv+I_p\in \Soc K/I_p$ for some $p\geq 1$. If $p=1$, then $\sigma\in\im\Soc\varphi_{n+1}$ as desired, so assume $p>1$. We proceed by descending induction: that is, we aim to show there is an element $\vecw+I_{p-1}\in \Soc K/I_{p-1}$ such that 
$$x^i(\vecv+I_p)=x^{i+1}(\vecw+I_{p-1})$$
for some $i\geq 1$, and hence $\varphi_{n+p-1}(\vecw+I_{p-1})=\varphi_{n+p}(\vecv+I_p)$.

We consider the element $x^g(\vecv+I_p)=x^g\vecv+I_{p+g}$, recalling that $g$ is the least integer such that $\fm^g\cap \Gamma_\fm(R)=0$. As $x^g\vecv+I_{p+g}$ is a socle element, we have:
\begin{align*}
x(x^g\vecv+I_{p+g})&=0+I_{p+g}\\
\implies & x^{g+1}\vecv\in I_{p+g}\\
\implies & x^{g+1}\vecv=a x^{p+g}\vecu^T\text{ for some $a\in R$, recalling $I_{p+g}=\im(x^{p+g}\vecu^T)$,}\\
\implies & x(x^g\vecv-a x^{p+g-1}\vecu^T)=0\\
\implies & x^g\vecv=a x^{p+g-1}\vecu^T \text{, by Lemma \ref{parameter_lemma}.}
\end{align*}
Since $p\geq 2$, we may set $\vecw=a x^{p-2}\vecu^T$, and notice that $x^g\vecv=x^{g+1}\vecw$. 

We claim $\vecw+I_{p-1}\in \Soc(K/I_{p-1})$. First, $\vecv\in K$ implies that $A^T \vecv=0$, hence $0=x^gA^T\vecv=x^{g+1}A^T\vecw$. As the entries of $\vecw$ are contained in $\fm^g$, so are the entries of $A^T\vecw$.  Lemma \ref{parameter_lemma} yields $A^T\vecw=0$, hence $\vecw\in K$. Next, for any $z\in \fm$, we have
$$zx^{g+1}\vecw=zx^g\vecv=bx^{p+g}\vecu^{T}\text{, for some $b\in R$,}$$
since $x^g\vecv+I_{p+g}$ is a socle element in $K/I_{p+g}$. Thus $x^{g+1}(z\vecw-bx^{p-1} \vecu^T)=0$. The entries of $z\vecw-bx^{p-1} \vecu^T$ are all in $\fm^g$, so Lemma \ref{parameter_lemma} implies that $z\vecw=bx^{p-1}\vecu^T$. Therefore $\vecw+I_{p-1} \in \Soc(K/I_{p-1})$, hence
\begin{align*}
\varphi_{n+p} (\vecv+I_p)&=\varphi_{n+p+g}(x^g\vecv+I_{g+p})\\
&=\varphi_{n+p+g}(x^{g+1}\vecw+I_{g+p})\\
&=\varphi_{n+p-1}(\vecw+I_{p-1}).
\end{align*}
By descending induction, there exists $\vecw'+I_1\in \Soc K/I_1$ such that
$$\varphi_{n+p}(\vecv+I_{p})=\varphi_{n+1}(\vecw'+I_1).$$
The desired map $\Soc\varphi_i$ is therefore surjective for $i\geq n+1=\max\{c,g\}+1$.
\end{proof}

The next example shows that the bound in Theorem \ref{main} is not sharp.
\begin{exa}\label{exa_main_not_sharp}
Let $k$ be an infinite field and and $a\geq 1$ a fixed integer. Consider the local ring $Q=k[\! [x,y]\! ]/(x^{a+1},xy^a)$ with maximal ideal $\fm=(x,y)$. 
A direct computation shows that the invariants in \eqref{c_and_g} for the ring $Q$ satisfy $c\leq a$, since $(y)\fm^a=\fm^{a+1}$, and $g=2a$. Thus $\max\{c,g\}+1=2a+1$.

For $i\geq 2a$, one has $\fm^i=(y^i)$, and the natural surjection $Q/\fm^{i+1}\to Q/\fm^i$ induces a commutative diagram with exact rows:
\[\xymatrix{
\cdots \ar[r] & Q\ar[r]^x\ar[d]^y & Q\ar[r]^{y^{i+1}}\ar[d]^y &  Q\ar[r]\ar[d]^{=} &  Q/\fm^{i+1} \ar[r]\ar[d] & 0\\
\cdots \ar[r] & Q\ar[r]^x & Q\ar[r]^{y^{i}} &  Q\ar[r] &  Q/\fm^i \ar[r] & 0.
}\]
Applying $\Hom_Q(-,Q)$ to this diagram, observe that $\Ext_Q^1(Q/\fm^i,Q)= (x^a,y^a)/(y^i)$ for each $i\geq 2a$. Moreover, the induced maps $\Ext_Q^1(Q/\fm^i,Q) \to \Ext_Q^1(Q/\fm^{i+1},Q)$ are multiplication by $y$.  

Theorem \ref{main} shows the canonical map $\Soc\varphi_i:\Soc\Ext_Q^1(Q/\fm^{i},Q)\to \Soc H_\fm^1(Q)$ is surjective for $i\geq 2a+1$; we claim that in fact $\Soc\varphi_{2a}$ is surjective as well. First note that for $i\geq 2a$, one has $\Soc\Ext_Q^1(Q/\fm^i,Q)\cong (x^ay^{a-1},y^{i-1})/(y^i)$. Thus it will suffice to consider the direct system 
\[\xymatrix{
(x^ay^{a-1},y^{2a-1})/(y^{2a}) \ar[r]^y & (x^ay^{a-1},y^{2a})/(y^{2a+1}) \ar[r]^-y & \cdots.
}\]
In this direct system, the element $x^ay^{a-1}\in(x^ay^{a-1},y^{2a})/(y^{2a+1})$ is sent to $0$, hence surjectivity of $\Soc\varphi_{2a+1}$ implies $\Soc H_\fm^1(Q)$ is generated by $\Soc\varphi_{2a+1}(y^{2a})$. Moreover, $y^{2a}\in (x^ay^{a-1},y^{2a})/(y^{2a+1})$ is the image of $y^{2a-1}\in (x^ay^{a-1},y^{2a-1})/(y^{2a})$, and so $\Soc H_\fm^1(Q)$ is also generated by $\Soc\varphi_{2a}(y^{2a-1})$, thus $\Soc\varphi_{2a}$ is surjective.
\end{exa}

Recall that an ideal $\fq$ of $R$ is reducible if $\fq=\fb\cap \fc$ for two ideals $\fb$ and $\fc$ of $R$ strictly containing $\fq$; if such a decomposition is not possible, then $\fq$ is irreducible.  The following consequence of Theorem \ref{main} allows us to characterize Gorenstein rings in terms of the existence of irreducible parameter ideals in $\fm^n$ for $n=\max\{c,g\}+1$; it can also be obtained using Rogers' \cite[Theorem 2.3]{Rog04} in place of Theorem \ref{main}.

\begin{cor}\label{main_cor}
Assume $k$ is infinite and $\dim(R)=1$. Set $n=\max\{c,g\}+1$. The ring $R$ is Gorenstein if and only if some parameter ideal in $\fm^n$ is irreducible.
\end{cor}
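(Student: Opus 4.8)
The plan is to feed Theorem \ref{main} into the Marley--Rogers--Sakurai criterion of Theorem \ref{MRS08}, using the characterization of the integer $\ell$ recorded just after \eqref{can_map}. Since $\dim R=1$, the map $\varphi_i$ of Theorem \ref{main} is precisely the canonical map \eqref{can_map} with $d=1$, so Theorem \ref{main} says that $\Soc\varphi_i$ is surjective for every $i\geq n=\max\{c,g\}+1$.

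First I would record that the set of $i$ for which $\Soc\varphi_i$ is surjective is upward closed. In the direct system computing $H_\fm^1(R)$, the structure maps factor as $\Soc\varphi_i=\Soc\varphi_{i+1}\circ\Soc t_i$, where $t_i$ is the transition map induced by $R/\fm^{i+1}\to R/\fm^i$; hence $\im(\Soc\varphi_i)\subseteq\im(\Soc\varphi_{i+1})$, and surjectivity at stage $i$ forces surjectivity at stage $i+1$. Therefore the least integer $\ell$ at which $\Soc\varphi_\ell$ becomes surjective---which, by the discussion following Theorem \ref{MRS08}, is a valid choice for the integer in that theorem---satisfies $\ell\leq n$ by Theorem \ref{main}. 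In particular $\fm^n\subseteq\fm^\ell$.

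For the backward implication, suppose some parameter ideal $\fq\subseteq\fm^n$ is irreducible. Since $\fm^n\subseteq\fm^\ell$, the ideal $\fq$ is an irreducible parameter ideal contained in $\fm^\ell$, so Theorem \ref{MRS08} yields that $R$ is Gorenstein. For the forward implication, if $R$ is Gorenstein then every parameter ideal of $R$ is irreducible, so it remains only to exhibit a parameter ideal sitting inside $\fm^n$: choosing any parameter $x$ of $R$, the power $x^n$ is again a parameter and $x^n\in\fm^n$, so $(x^n)\subseteq\fm^n$ is a parameter ideal, necessarily irreducible.

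The argument is essentially bookkeeping, so I do not expect a serious obstacle; the only two points requiring care are the persistence of surjectivity in the direct system---which is what lets me conclude $\ell\leq n$ rather than merely an existence statement---and the observation that in dimension one a parameter ideal contained in $\fm^n$ always exists. Together these let the quantitative bound of Theorem \ref{main} pass directly through the Marley--Rogers--Sakurai characterization to the desired equivalence.
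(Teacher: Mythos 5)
Your argument is correct and is essentially the paper's own proof, which simply combines Theorem \ref{main} with \cite[Theorem 2.7]{MRS08} (the source of the characterization of $\ell$ quoted after \eqref{can_map}); you have merely spelled out the bookkeeping, including the upward persistence of surjectivity and the existence of a parameter ideal inside $\fm^n$, both of which are fine.
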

\begin{proof}
This follows from Theorem \ref{main} and \cite[Theorem 2.7]{MRS08}.
\end{proof}

The least integer $\ell$ required to determine whether $R$ is Gorenstein in terms of the existence of an irreducible parameter ideal in $\fm^\ell$ depends on $R$ and is thus at most $\max\{c,g\}+1$ in the case of a dimension $1$ local ring with an infinite residue field. We next show that given an integer $a$, there exists a ring with $a<\ell\leq 2a$.
\begin{exa}\label{exampledim1}
Let us return to the ring $Q=k[\! [x,y]\! ]/(x^{a+1},xy^a)$ and the setting of Example \ref{exa_main_not_sharp}.
The ring $Q$ has dimension 1 and depth 0, hence is non-Gorenstein. As noted above, $\max\{c,g\}+1=2a+1$, so Corollary \ref{main_cor} implies that every parameter ideal in $\fm^{2a+1}$ is reducible, hence $\ell \leq 2a+1$. In fact, the computation in Example \ref{exa_main_not_sharp}---which relies on Theorem \ref{main}---shows that $\ell\leq 2a$ and that every parameter ideal in $\fm^{2a}$ is reducible.

On the other hand, the parameter ideal $(y^a)$ is irreducible, thus $\ell > a$. To see this, it is enough to show that any ideal of $Q$ properly containing $(y^a)$ also contains the nonzero element $x^ay^{a-1}$. Let $\fb\subseteq Q$ be an ideal that properly contains $(y^a)$ and fix $\beta\in \fb\setminus (y^a)$. Write $\beta=\sum_{s,t\geq 0} a_{s,t}x^sy^t$, with $a_{s,t}\in k$. Because $\beta\not\in (y^a)$, the set $\Lambda=\{(s,t) \mid \text{$a_{s,t}\not=0$, $s\leq a$, and $t\leq a-1$}\}$ is nonempty. Choose $(s_0,t_0)\in \Lambda$ with $s_0+t_0\leq s+t$ for all $(s,t)\in \Lambda$. Noting the element $x^{a-s_0}y^{a-1-t_0}\beta$ belongs to $\fm^{2a-1}=(x^ay^{a-1},y^{2a-1})$, it follows that the ideal $\fb$ contains the element $a_{s_0,t_0}^{-1}x^{a-s_0}y^{a-1-t_0}\beta=x^ay^{a-1}+\varepsilon$ with $\varepsilon\in (y^{2a-1})\subseteq (y^a)$. Thus $x^ay^{a-1}\in \fb$.
\end{exa} 

\section{A bound in higher dimensions}\label{section_higherdim}
\noindent
For rings of higher dimension, the problem of determining surjectivity of the socle of the map in \eqref{can_map} becomes more subtle, with obstructions similar to those noted by Fouli and Huneke \cite[Discussion 4.5]{FH11}. In particular, it is not clear to us whether the same type of ``lifting'' technique employed in Theorem \ref{main} can be used to show surjectivity of the socle of the map in \eqref{can_map} if $\dim R>1$. Our solution here is to instead consider surjectivity of the socle of the map in \eqref{can_map2} for $p_s$-standard systems of parameters (defined below).

For this section, $(R,\fm,k)$ is a commutative noetherian local ring with $\dim R=d$. 
\begin{rmk}\label{quotientconnectingmap}
Let $M$ be a finitely generated $R$-module with $\dim M=t>0$ and let $x$ be a parameter of $M$. 
The exact sequence 
$0\to M/(0:_{M}x)\xrightarrow{x} M\to M/xM\to 0$
induces a canonical connecting homomorphism ${H_\fm^{t-1}(M/xM)\to H_\fm^t(M/(0:_{M}x))}$. Moreover, the containment $(x)+\ann_R M\subseteq \ann_R(0:_{M}x)$ implies $\dim (0:_{M}x)<t$, hence the exact sequence $0\to (0:_Mx)\to M\to M/(0:_{M}x)\to 0$ along with Grothendieck's Vanishing Theorem (for example, \cite[6.1.2]{BS13}) yields an isomorphism
\begin{align}\label{modoutcolon}
\xymatrix{H_\fm^t(M) \ar[r]^-{\cong} & H_\fm^t(M/(0:_{M}x)).}
\end{align}
Composing the connecting homomorphism $H_\fm^{t-1}(M/xM)\to H_\fm^t(M/(0:_{M}x))$ from above with the inverse of the isomorphism in \eqref{modoutcolon} yields a homomorphism
\begin{align*}
\xymatrix{
\delta^{M}_{x}:H_\fm^{t-1}(M/xM) \ar[r] & H_\fm^t(M).
}\end{align*} 
\end{rmk}

The next two lemmas are the primary tools for proving one of our main results below, Proposition \ref{simple_surjection}. In light of the isomorphism in \eqref{modoutcolon}, this first lemma essentially follows from a result of Cuong and Quy \cite[Proposition 2.1]{CQ11}, but we spell out some of the details in order to keep track of the map inducing the surjection, which we will need later.
\begin{lem}\label{topcase}
Let $M$ be a finitely generated $R$-module with $\dim M=t>0$. Let $x$ be a parameter of $M$. If $x\in \ann_RH_\fm^{t-1}(M)$, then for $s\geq 2$ the map $\delta^M_{x^s}$ defined in Remark \ref{quotientconnectingmap} induces a split surjection 
\[\xymatrix{
\Soc\delta^M_{x^s}:\Soc H_{\fm}^{t-1}(M/x^sM)\ar[r] & \Soc H_{\fm}^t(M).
}\]
\end{lem}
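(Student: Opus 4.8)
The plan is to apply the long exact local cohomology sequence to the defining sequence $0\to M/(0:_Mx^s)\xrightarrow{x^s}M\xrightarrow{q}M/x^sM\to 0$ and, via the identification of $\delta^M_{x^s}$ in Remark \ref{quotientconnectingmap}, read off the two basic facts $\ker\delta^M_{x^s}=\im g$ and $\im\delta^M_{x^s}=(0:_{H_\fm^t(M)}x^s)$, where $g=H_\fm^{t-1}(q)\colon H_\fm^{t-1}(M)\to H_\fm^{t-1}(M/x^sM)$. The second identification uses that, under the isomorphism \eqref{modoutcolon}, the relevant connecting map becomes multiplication by $x^s$ on $H_\fm^t(M)$. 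Since $x\in\fm$, we have $\Soc H_\fm^t(M)\subseteq(0:_{H_\fm^t(M)}x^s)=\im\delta^M_{x^s}$ and $\Soc\big(\im\delta^M_{x^s}\big)=\Soc H_\fm^t(M)$, so the target of $\Soc\delta^M_{x^s}$ is correct; moreover both socles are annihilated by $\fm$, hence are $k$-vector spaces, so once I prove surjectivity the splitting is automatic (any $k$-linear surjection splits). Thus the entire content is surjectivity of $\Soc\delta^M_{x^s}$.

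I would then record two structural inputs. First, multiplication by $x^{s-1}$ on $H_\fm^t(M)$ is surjective: because $\dim M/xM=t-1<t$, Grothendieck vanishing gives $H_\fm^t(M/xM)=0$, so the sequence above (for the single element $x$) shows $\mu_x$ is onto via \eqref{modoutcolon}, whence $\mu_{x^{s-1}}$ is onto. Second, $g$ is injective, and likewise $g'=H_\fm^{t-1}(M\to M/xM)$ is injective: the map preceding $g$ in the long exact sequence, precomposed with the surjection $H_\fm^{t-1}(M)\twoheadrightarrow H_\fm^{t-1}(M/(0:_Mx^s))$ coming from $\dim(0:_Mx^s)<t$, equals multiplication by $x^s$ on $H_\fm^{t-1}(M)$, which vanishes because $x\in\ann_R H_\fm^{t-1}(M)$; since the precomposed surjection is onto, the preceding map is zero, so $\ker g=0$ (and similarly $\ker g'=0$).

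The main construction is as follows. Given $\eta\in\Soc H_\fm^t(M)$, use surjectivity of $\mu_{x^{s-1}}$ to write $\eta=x^{s-1}\theta_0$; then $x^s\theta_0=x\eta=0$, so $\theta_0\in(0:_{H_\fm^t(M)}x^s)=\im\delta^M_{x^s}$. Choose $\xi$ with $\delta^M_{x^s}(\xi)=\theta_0$ and set $\zeta=x^{s-1}\xi$, so that $\delta^M_{x^s}(\zeta)=x^{s-1}\theta_0=\eta$. It remains to check $\zeta\in\Soc H_\fm^{t-1}(M/x^sM)$. On one hand $\delta^M_{x^s}(\fm\zeta)=\fm\eta=0$, so $\fm\zeta\subseteq\ker\delta^M_{x^s}=\im g$; on the other hand $\fm\zeta=x^{s-1}\fm\xi\subseteq x^{s-1}H_\fm^{t-1}(M/x^sM)$. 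Hence everything reduces to the intersection statement $x^{s-1}H_\fm^{t-1}(M/x^sM)\cap\im g=0$.

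This intersection statement is the crux and the main obstacle, as it is exactly the passage from a lift annihilated by $x$ to one annihilated by all of $\fm$, and it is where $s\geq 2$ and the hypothesis $x\in\ann_R H_\fm^{t-1}(M)$ are used simultaneously. To prove it, take $x^{s-1}\beta=g(\gamma)$ and apply $\pi_*=H_\fm^{t-1}(\pi)$ for the projection $\pi\colon M/x^sM\to M/xM$; functoriality gives $\pi_*g=g'$, so the right-hand side maps to $g'(\gamma)$, while the left-hand side maps to $x^{s-1}\pi_*(\beta)=0$ since $x$ annihilates $H_\fm^{t-1}(M/xM)$ and $s-1\geq 1$. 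Injectivity of $g'$ then forces $\gamma=0$, so $x^{s-1}\beta=g(\gamma)=0$. Therefore $\fm\zeta=0$, i.e. $\zeta\in\Soc H_\fm^{t-1}(M/x^sM)$, which establishes surjectivity of $\Soc\delta^M_{x^s}$; the splitting follows formally from the two socles being $k$-vector spaces. I expect verifying the crux (and, as a prerequisite, the injectivity of $g$ and $g'$) to be the delicate part, whereas the identifications of kernel, image, and the surjectivity of $\mu_{x^{s-1}}$ are routine consequences of the long exact sequence and \eqref{modoutcolon}.
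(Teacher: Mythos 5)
Your proof is correct, but it takes a genuinely different route from the paper's. The paper maps the short exact sequence $0\to M/(0:_Mx)\xrightarrow{x}M\to M/xM\to 0$ onto the corresponding sequence for $x^s$ via the natural surjection on the left and multiplication by $x^{s-1}$ in the middle and on the right; since $x^{s-1}$ kills $H_\fm^{t-1}(M)$ (this is where $s\ge 2$ enters there), the induced map $x^{s-1}\colon H_\fm^{t-1}(M/xM)\to H_\fm^{t-1}(M/x^sM)$ factors through $(0:_{H_\fm^t(M)}x)$, producing $\varepsilon$ with $\delta^M_{x^s}\varepsilon=\iota$, the inclusion $(0:_{H_\fm^t(M)}x)\hookrightarrow(0:_{H_\fm^t(M)}x^s)$; restricting to socles, where $\iota$ becomes the identity, gives the split surjection. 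That is the splitting argument of Cuong--Quy and yields a section of $\delta^M_{x^s}$ over all of $(0:_{H_\fm^t(M)}x)$, not merely over the socle. You instead prove surjectivity on socles by an element chase --- divide $\eta$ by $x^{s-1}$ using $H_\fm^t(M/xM)=0$, lift along $\delta^M_{x^s}$, multiply back by $x^{s-1}$ --- and recover the splitting formally from the fact that a surjection of $k$-vector spaces splits. Your ingredients are different: surjectivity of $x^{s-1}$ on $H_\fm^t(M)$, injectivity of $g$ and $g'$, and the intersection statement $x^{s-1}H_\fm^{t-1}(M/x^sM)\cap\im g=0$, which you establish by pushing forward along $\pi\colon M/x^sM\to M/xM$ (the opposite direction from the paper's comparison map). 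All of these steps check out; note only that your claim that $x$ annihilates $H_\fm^{t-1}(M/xM)$ holds for the trivial reason that $x$ annihilates the module $M/xM$ itself, independently of the hypothesis on $H_\fm^{t-1}(M)$ (which you do use, correctly, to get injectivity of $g$ and $g'$). Your route is more elementary and self-contained; the paper's is shorter and delivers the stronger module-level splitting.
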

\begin{proof}
The inclusion $(0:_{M}x)\subseteq (0:_{M}x^s)$ induces the left vertical map in the next commutative diagram with exact rows:
\[\xymatrix{
0 \ar[r] & \frac{{M}}{(0:_{M}x)} \ar@{->>}[d]\ar[r]^-{x} & {M} \ar[r]\ar[d]^{x^{s-1}} & {M}/{x}{M}\ar[r]\ar[d]^{x^{s-1}} & 0\\
0 \ar[r] & \frac{{M}}{(0:_{M}x^s)} \ar[r]^-{x^s} & {M} \ar[r] & {M}/x^s{M}\ar[r] & 0
}\]
From \eqref{modoutcolon}, using that both $x$ and $x^s$ are parameters of $M$, there are isomorphisms $H_{\fm}^t({{M}}/{(0:_{M}x^s)})\cong H_\fm^t({M})\cong H_{\fm}^t({{M}}/{(0:_{M}{x})})$. We thus obtain a commutative diagram with exact rows:
\[\xymatrix{
H_\fm^{t-1}({M}) \ar[r]^{x}\ar[d]  & H_{\fm}^{t-1}({M}) \ar[r]\ar[d]^{x^{s-1}} & H_{\fm}^{t-1}({M}/{x}{M}) \ar[r]^-{\delta'}\ar[d]^{x^{s-1}} & H_{\fm}^t({M}) \ar[r]^{x}\ar[d]^{=} & H_{\fm}^t({M})\ar[d]^{x^{s-1}}\\
H_\fm^{t-1}({M}) \ar[r]^{x^s}  & H_{\fm}^{t-1}({M}) \ar[r] & H_{\fm}^{t-1}({M}/x^s{M}) \ar[r]^-{\delta} & H_{\fm}^t({M}) \ar[r]^{x^s} & H_{\fm}^t({M})
}\]
where $\delta=\delta_{x^s}^M$ and $\delta'=\delta^M_x$ are the maps defined in Remark \ref{quotientconnectingmap}. By assumption, ${x} H_\fm^{t-1}({M})=0$, hence this yields the next commutative diagram with exact rows:
\[\xymatrix{
0 \ar[r] & H_{\fm}^{t-1}({M}) \ar[r]\ar[d]^0 & H_{\fm}^{t-1}({M}/{x}{M}) \ar[r]\ar[d]^{x^{s-1}} & (0:_{H_{\fm}^t({M})}{x}) \ar[r] \ar[d]^{\iota}_{\subseteq} & 0\\
0 \ar[r] & H_{\fm}^{t-1}({M}) \ar[r] & H_{\fm}^{t-1}({M}/x^s{M}) \ar[r]^-{\delta} & (0:_{H_{\fm}^t({M})}x^s) \ar[r] & 0
}\]
Following the argument in \cite[proof of Proposition 2.1]{CQ11}, note that the middle vertical map induces $\varepsilon:(0:_{H_{\fm}^t({M})}{x}) \to H_{\fm}^{t-1}({M}/x^s{M})$ such that $\delta\varepsilon=\iota$. Since $\iota$ is the inclusion, and $\Soc(0:_{H_\fm^t({M})}x)=\Soc(0:_{H_\fm^t({M})}x^s)=\Soc H_{\fm}^t({M})$, we see that $\Soc \iota=1_{\Soc(H_{\fm}^t({M}))}$ and thus $(\Soc\delta)(\Soc\varepsilon)=1_{\Soc(H_{\fm}^t({M}))}$. It follows that $\Soc\delta=\Soc\delta^M_{x^s}$ is a split surjection.
\end{proof}

Given a system of parameters $x_1,...,x_d$ of $R$, the canonical map in \eqref{can_map2} to the direct limit $\smash{\xymatrix{R/(x_1^i,...,x_d^i) \ar[r] & \dirlim_j R/(x_1^j,...,x_d^j)}}$ is defined by the direct system
\[\xymatrix@C=4em{R/(x_1,...,x_d)\ar[r]^-{x_1\cdots x_d} & R/(x_1^2,...,x_d^2)\ar[r]^-{x_1\cdots x_d} & \cdots.
}\]
Moreover, there is a unique isomorphism, see \cite[Theorem 5.2.9]{BS13}:
\[\xymatrix{\dirlim_j R/(x_1^j,...,x_d^j)\ar[r]^-{\cong} & H_\fm^d(R).}\] 
\begin{lem}\label{mapsagreelemma}
Let $x_1,...,x_d$ be a system of parameters of $R$. The canonical map $R/(x_1,...,x_d) \to \dirlim_j R/(x_1^j,...,x_d^j)\cong H_\fm^d(R)$ agrees with the composition
\[
\xymatrix@C=1em{
H_\fm^0(R/(x_1,...,x_d))\ar[r]^-{} & H_\fm^1(R/(x_2,...,x_d)) \ar[r] & \cdots \ar[r] & H_\fm^{d-1}(R/(x_d)) \ar[r]^-{} & H_\fm^d(R)
}\]
of homomorphisms defined in Remark \ref{quotientconnectingmap}.
\end{lem}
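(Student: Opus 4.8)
The plan is to induct on $d=\dim R$, peeling off the last parameter $x_d$ in order to isolate a single connecting homomorphism. When $d=0$ the system of parameters is empty, $R=H_\fm^0(R)$, and both the canonical map and the (empty) composition are the identity, so there is nothing to prove. For $d\geq 1$, set $\bar R=R/x_dR$ and let $\bar x_i$ denote the image of $x_i$; these form a system of parameters of the $(d-1)$-dimensional ring $\bar R$. With $M_k=R/(x_{k+1},\dots,x_d)$ as in the statement, one checks that $M_k=\bar R/(\bar x_{k+1},\dots,\bar x_{d-1})$ for $0\leq k\leq d-1$ (so $M_{d-1}=\bar R$), and that $H_\fm^\bullet(-)$ computed over $R$ agrees with local cohomology over $\bar R$ on $\bar R$-modules. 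Hence the composite of the first $d-1$ connecting maps in the statement is precisely the corresponding composite for $\bar R$, which by the inductive hypothesis equals the canonical map $\operatorname{can}_{\bar R}\colon R/(x_1,\dots,x_d)=\bar R/(\bar x_1,\dots,\bar x_{d-1})\to H_\fm^{d-1}(\bar R)$. The lemma thus reduces to the single-variable identity $\operatorname{can}_R=\delta^R_{x_d}\circ\operatorname{can}_{\bar R}$.

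To establish this I would compute with the \v Cech complex $\check C=\check C^\bullet(x_1,\dots,x_d)$, which computes $H_\fm^\bullet(-)$ since $(x_1,\dots,x_d)$ is $\fm$-primary and under which, by \cite[Theorem 5.2.9]{BS13}, $\operatorname{can}_R$ sends $\bar 1$ to the class $[1/(x_1\cdots x_d)]$ in $H^d(\check C)=H_\fm^d(R)$. Applying the exact functor $\check C\otimes_R-$ to the short exact sequence $0\to R/(0:_Rx_d)\xrightarrow{x_d}R\to\bar R\to 0$ of Remark \ref{quotientconnectingmap} yields a short exact sequence of complexes; its connecting homomorphism $H_\fm^{d-1}(\bar R)\to H_\fm^d(R/(0:_Rx_d))$, composed with the inverse of the isomorphism \eqref{modoutcolon}, is exactly $\delta^R_{x_d}$. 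Because $\bar R_{x_d}=0$, the complex $\check C\otimes\bar R$ collapses to $\check C^\bullet(x_1,\dots,x_{d-1})\otimes\bar R$, whose top cohomology is $H_\fm^{d-1}(\bar R)$ and in which $\operatorname{can}_{\bar R}(\bar 1)$ is represented by the cocycle $1/(x_1\cdots x_{d-1})$ in the summand of $\check C^{d-1}\otimes\bar R$ omitting $x_d$. Lifting this cocycle to the same element of $\check C^{d-1}\otimes R$ and applying the \v Cech differential gives $\pm\,x_d/(x_1\cdots x_d)\in\check C^d\otimes R$, which is the image under multiplication by $x_d$ of $\pm\,\bar 1/(x_1\cdots x_d)\in\check C^d\otimes R/(0:_Rx_d)$; hence the connecting homomorphism carries $\operatorname{can}_{\bar R}(\bar 1)$ to $\pm\,[\bar 1/(x_1\cdots x_d)]$. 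Since inverting $x_d$ annihilates $(0:_Rx_d)$, the isomorphism \eqref{modoutcolon} acts on these representatives by $[r/(x_1\cdots x_d)]\mapsto[\bar r/(x_1\cdots x_d)]$, so composing with its inverse returns $\pm\,[1/(x_1\cdots x_d)]=\pm\,\operatorname{can}_R(\bar 1)$.

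I expect the one genuinely delicate point to be the sign bookkeeping just encountered: the computation shows that $\operatorname{can}_R$ and $\delta^R_{x_d}\circ\operatorname{can}_{\bar R}$ agree on the generator $\bar 1$ up to a sign depending only on $d$ and on the conventions chosen for the \v Cech differential, for the isomorphism $\dirlim_j R/(x_1^j,\dots,x_d^j)\cong H_\fm^d(R)$, and for the connecting homomorphism. The crux is to fix these conventions compatibly (as in \cite{BS13}) so that the sign is $+1$; then both maps send $\bar 1$ to $[1/(x_1\cdots x_d)]$, and since $\bar 1$ generates the cyclic module $R/(x_1,\dots,x_d)$ the maps coincide, closing the induction. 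A cleaner variant that sidesteps the comparison would be to \emph{define} the isomorphism $\dirlim_j R/(x_1^j,\dots,x_d^j)\cong H_\fm^d(R)$ by iterating the connecting homomorphisms of Remark \ref{quotientconnectingmap}, making the asserted agreement nearly tautological and leaving only the identification of this map with the standard one---again a sign check.
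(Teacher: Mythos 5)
Your proposal is correct in outline and arrives at the same factorization the paper uses, but it verifies the crucial one-variable step by a genuinely different method. Both arguments reduce the lemma to matching, for each $t$, a single ``partial'' canonical map $\dirlim_i R/(x_1^i,\dots,x_{t-1}^i,x_t,\dots,x_d)\to\dirlim_j\dirlim_i R/(x_1^i,\dots,x_{t-1}^i,x_t^j,x_{t+1},\dots,x_d)$ with the map $\delta_t$ of Remark \ref{quotientconnectingmap}; your induction on $d$, peeling off $x_d$ and passing to $\bar R=R/x_dR$, is this same factorization read from the other end, and your reduction of the first $d-1$ connecting maps to the lemma for $\bar R$ (via independence of base for local cohomology) is sound. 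The divergence is in the single-step comparison. You compute with explicit \v{C}ech cocycles: lift $1/(x_1\cdots x_{d-1})$, apply the differential, and chase the connecting homomorphism and \eqref{modoutcolon} by hand. The paper instead invokes the natural equivalence $\dirlim_j(R/(x_1^j,\dots,x_u^j)\otimes_R-)\cong H^u_{(x_1,\dots,x_u)}(-)$ and the uniqueness clause of \cite[Theorem 5.2.9]{BS13} to replace the inner limit by local cohomology functorially, then takes the direct limit of the long exact sequences coming from $0\to (R/(x_{t+1},\dots,x_d))/(0:x_t^j)\xrightarrow{x_t^j}R/(x_{t+1},\dots,x_d)\to R/(x_t^j,x_{t+1},\dots,x_d)\to0$; the outer columns become localizations at $x_t$ of $\fm$-torsion modules, hence vanish, which identifies the limit map $\beta_t$ with $\delta_t$ without ever choosing a representative. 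That is precisely what the paper's route buys: no cocycle, hence no sign. In your version the residual task that you honestly flag---checking that the sign is $+1$ and that the isomorphism $\dirlim_jR/(x_1^j,\dots,x_d^j)\cong H^d_\fm(R)$ named in the lemma is the one sending $\bar r$ to $[r/(x_1\cdots x_d)]$ under your \v{C}ech orientation---is genuine and must be carried out against the conventions of \cite{BS13}. It is routine but it is the one step separating your sketch from a complete proof; if you would rather avoid it, the paper's limit-of-long-exact-sequences argument is the cleaner way to close that last gap.
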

\begin{proof}
First note that $R/(x_1,...,x_d)=H_\fm^0(R/(x_1,...,x_d))$. Further, the canonical map $R/(x_1,...,x_d) \to \dirlim_j R/(x_1^j,...,x_d^j)$ can be decomposed as the composition of the following canonical maps:
\[\xymatrix@C=1em{
\frac{R}{(x_1,...,x_d)} \ar[r] &  \dirlim_j \frac{R}{(x_1^j,x_2,...,x_d)} \ar[r] & \cdots \ar[r] &  \dirlim_j \frac{R}{(x_1^j,...,x_{d-1}^j,x_d)} \ar[r] & \dirlim_j \frac{R}{(x_1^j,...,x_d^j)}.
}\]
Fix $0< t \leq d$. 
It is therefore sufficient to show that the next two maps agree up to isomorphism; indeed, by \cite[Theorem 5.2.9]{BS13} there is a unique isomorphism between the domains, and another between the codomains:
\begin{align*}
&\xymatrix{\alpha_t:\dirlim_i \frac{R}{(x_1^i,...,x_{t-1}^i,x_{t},...,x_d)} \ar[r] & \dirlim_j\dirlim_{i} \frac{R}{(x_1^i,...,x_{t-1}^i,x_{t}^j,x_{t+1},...,x_d)},}\text{ and }\\
&\xymatrix{\delta_t:H_\fm^{t-1}(\frac{R}{(x_t,...,x_d)}) \ar[r] & H_\fm^t(\frac{R}{(x_{t+1},...,x_d)})},
\end{align*}
where $\alpha_t$ is the canonical map to the direct limit and $\delta_t=\delta^{R/(x_{t+1},...,x_d)}_{x_t}$ is the map defined in Remark \ref{quotientconnectingmap}.

For $0\leq u\leq d-1$, there is by \cite[Theorem 5.2.9]{BS13} a natural equivalence of functors $\dirlim_j( \frac{R}{(x_1^j,...,x_{u}^j)}\otimes_R-)\xrightarrow{\cong} H_{(x_1,...,x_u)}^u(-)$. This provides the isomorphisms and commutativity in the following diagram:
\[\xymatrix@C=1.7em{
\dirlim_{i}\frac{R}{(x_1^i,...,x_{t-1}^i,x_t,...,x_d)} \ar[r]^-{\cong} \ar[d]^{\alpha_t}& H_\fm^{t-1}(\frac{R}{(x_t,...,x_d)})\ar[d]^{\beta_t}\\
\dirlim_j\dirlim_{i}\frac{R}{(x_1^i,...,x_{t-1}^i,x_t^j,x_{t+1},...,x_d)} \ar[r]^-{\cong} & \dirlim_j H_\fm^{t-1}(\frac{R}{(x_t^j,x_{t+1},...,x_d)}),
}\]
where $\beta_t$ is the canonical map to the direct limit and the modules on the right have also utilized the fact that   $H_{(x_1,...,x_{t-1})}^{t-1}(\frac{R}{(x_t^j,x_{t+1},...,x_d)})\cong H_\fm^{t-1}(\frac{R}{(x_t^j,x_{t+1},...,x_d)})$ for each $j\geq 1$; see \cite[Exercise 2.1.9]{BS13}. It remains only to show that $\beta_t$ is isomorphic to $\delta_t$. To see this, consider the short exact sequence of direct systems:
\[\xymatrix{
0 \ar[r] & \frac{R/(x_{t+1},...,x_d)}{(0:x_t)} \ar[d] \ar[r]^{x_t} & R/(x_{t+1},...,x_d) \ar[r]\ar[d]^{x_t} & R/(x_t,...,x_d) \ar[r]\ar[d]^{x_t} & 0\\
0 \ar[r] & \frac{R/(x_{t+1},...,x_d)}{(0:x_t^2)} \ar[r]^{x_t^2}\ar[d] & R/(x_{t+1},...,x_d) \ar[r] \ar[d]^-{x_t}& R/(x_t^2,x_{t+1},...,x_d) \ar[r]\ar[d]^-{x_t} & 0\\
&\vdots&\vdots&\vdots&
}\]
Each row of this diagram yields a long exact sequence in local cohomology; along with the homomorphisms defined in Remark \ref{quotientconnectingmap}, this gives the following commutative diagram with exact rows: 
\[\xymatrix@C=1em{
H_\fm^{t-1}(\frac{R}{(x_{t+1},...,x_d)}) \ar[r]\ar[d]^{x_t} & H_\fm^{t-1}(\frac{R}{(x_t,...,x_d)}) \ar[r]^{\delta_t}\ar[d]^{x_t} & H_\fm^t(\frac{R}{(x_{t+1},...,x_d)}) \ar[r] \ar[d]^{=} & H_\fm^t(\frac{R}{(x_{t+1},...,x_d)})\ar[d]^{x_t}\\
H_\fm^{t-1}(\frac{R}{(x_{t+1},...,x_d)}) \ar[r]\ar[d]^-{x_t} & H_\fm^{t-1}(\frac{R}{(x_t^2,x_{t+1},...,x_d)}) \ar[r]\ar[d]^-{x_t} & H_\fm^t(\frac{R}{(x_{t+1},...,x_d)}) \ar[r]\ar[d]^{=} & H_\fm^t(\frac{R}{(x_{t+1},...,x_d)})\ar[d]^-{x_t}\\
\vdots&\vdots&\vdots&\vdots
}\]
The map from the second term in the first row, $H_\fm^{t-1}(\frac{R}{(x_t,...,x_d)})$, to the direct limit of the second column is $\beta_t$. The direct limits of the left and right columns are the localizations $(H_\fm^{t-1}(\frac{R}{(x_{t+1},...,x_d)}))_{x_t}$ and $(H_\fm^t(\frac{R}{(x_{t+1},...,x_d)}))_{x_t}$, respectively. These are zero since they are both $\fm$-torsion \cite[2.1.3]{BS13} and multiplication by $x_t\in \fm$ is invertible on either module. Hence the direct limit of the middle maps is an isomorphism, showing that $\beta_t$ and $\delta_t$ are isomorphic.
\end{proof}

In order to prove surjectivity of the socle of the map in \eqref{can_map2}, we introduce the following special types of systems of parameters. Let $M$ be a finitely generated $R$-module with $\dim M=t>0$.  As is standard, denote the annihilator of $H_\fm^i(M)$ by $\fa_i(M)=\ann_R H_\fm^i(M)$, and put $\fa(M)=\fa_0(M)\cdots \fa_{t-1}(M)$. A system of parameters $x_1,...,x_t$ of $M$ is called a \emph{$p$-standard system of parameters} if $x_t\in \fa(M)$ and $x_i\in \fa(M/(x_{i+1},...,x_t)M)$ for $i=1,...,t-1$. Such systems were defined at this level of generality by Cuong \cite{Cuo95}, who noted that a result of Schenzel \cite[Korollar 2.2.4]{Sch82} implies that every finitely generated $R$-module has a $p$-standard system of parameters provided $R$ has a dualizing complex. Indeed, if $R$ has a dualizing complex, then $\dim R/\fa(M)<t$ by \cite[Korollar 2.2.4]{Sch82} and so prime avoidance provides an element $x_t\in \fa(M)$ that is a parameter of $M$. Inductively, this shows the existence of $p$-standard systems of parameters, as well as the existence of the following variant\footnote{In fact, one may find by \cite[Korollar 2.2.4]{Sch82} an element in $\fa_{t-1}(M)$ that is a parameter of $M$, provided $R$ has a dualizing complex. Hence for our purposes, one may instead find a system of parameters $x_1,...,x_t$ of $M$ satisfying $x_i\in \fa_{i-1}(M/(x_{i+1}^s,...,x_{t}^s)M)$ for $i=1,...,t$. The notion considered in Definition \ref{ps_standard} is chosen to be reminiscent of $p$-standard systems.}, provided $R$ has a dualizing complex.

\begin{dfn}\label{ps_standard}
Let $M$ be a finitely generated $R$-module with $\dim M=t$. For an integer $s\geq 1$, a system of parameters $x_1,...,x_t$ of $M$ is called a \emph{$p_s$-standard system of parameters} if $x_t\in \fa(M)$ and $x_i\in \fa(M/(x_{i+1}^s,...,x_t^s)M)$ for $i=1,...,t-1$. 
\end{dfn}
\noindent
Evidently, $p_1$-standard and $p$-standard systems of parameters are the same, but the relationship between $p_s$-standard and $p$-standard systems of parameters is less straightforward for $s\geq 2$. 

\begin{rmk}\label{CuongCuong}
If $x_1,...,x_t$ is a $p_s$-standard system of parameters of $M$ for an integer $s\geq 1$, then $x_1^s,...,x_t^s$ is a $p$-standard system of parameters of $M$; this follows straight from the definitions. Thus the existence of a $p_s$-standard system of parameters of $M$ for some $s\geq 1$ implies the existence of a $p$-standard system of parameters of $M$. 

Conversely, it follows from work of Cuong and Cuong \cite{CC07} that the existence of a $p$-standard system of parameters of $M$ implies the existence of a $p_s$-standard system of parameters of $M$ for every $s\geq 1$: indeed, if $x_1,...,x_t$ is a $p$-standard system of parameters of $M$ and $s\geq 1$, then  \cite[Corollary 3.9]{CC07} implies that $x_1^t,...,x_i^t,x_{i+1}^{st},...,x_t^{st}$ is also $p$-standard for each $i=1,...,t$ hence $x_1^t,...,x_t^t$ is a $p_s$-standard system of parameters of $M$. Moreover, Cuong and Cuong prove in \cite[Theorem 5.2]{CC17} that the existence of a $p$-standard system of parameters is equivalent to the ring $R$ being a quotient of a local Cohen-Macaulay ring.

In this paper, we consistently express our results in terms of $p_s$-standard systems of parameters so that the exposition is relatively self-contained, but combining with the aforementioned results one can reformulate our results in terms of $p$-standard systems of parameters and observe that Theorem \ref{Gor_char_higherdim} below applies to any ring $R$ that is a quotient of a local Cohen-Macaulay ring.
\end{rmk}

We now come to proving surjectivity of the socle of the map in \eqref{can_map2} for $p_s$-standard systems of parameters if $s\geq 2$. The main distinction between the next result and \cite[Proposition 2.5]{MRS08} or \cite[Lemma 3.12]{GS03} is that here we have some control for the point at which the induced maps on socles are surjective.
\begin{prp}\label{simple_surjection}
Let $x_1,...,x_d$ be a system of parameters of $R$. If $x_1,...,x_d$ is a $p_s$-standard system of parameters for some $s\geq 2$, then the canonical map $R/(x_1^s,...,x_d^s)\to \dirlim_j R/(x_1^j,...,x_d^j)\cong H_\fm^d(R)$ induces a split surjection
\[\xymatrix{
\Soc R/(x_1^s,...,x_d^s)\ar[r] & \Soc H_{\fm}^d(R).
}\]
Hence the canonical map $\Soc R/(x_1^i,...,x_d^i)\to \Soc H_\fm^d(R)$ is surjective for $i\geq s$.
\end{prp}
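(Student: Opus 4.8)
The plan is to factor the canonical map through the tower of connecting homomorphisms supplied by Lemma~\ref{mapsagreelemma} and then to verify that each stage of this tower is a split surjection on socles by invoking Lemma~\ref{topcase}. To set this up, write $y_i=x_i^s$ for $i=1,\dots,d$; since $x_1,\dots,x_d$ is a system of parameters, so is $y_1,\dots,y_d$. The terms $R/(y_1^j,\dots,y_d^j)=R/(x_1^{sj},\dots,x_d^{sj})$ form a cofinal subsystem of the direct system $R/(x_1^j,\dots,x_d^j)$, and the transition maps are compatible because $y_1\cdots y_d=(x_1\cdots x_d)^s$. Consequently $\dirlim_j R/(y_1^j,\dots,y_d^j)\cong\dirlim_j R/(x_1^j,\dots,x_d^j)\cong H_\fm^d(R)$, and under this identification the map $R/(x_1^s,\dots,x_d^s)\to\dirlim_j R/(x_1^j,\dots,x_d^j)$ of the statement is exactly the canonical map out of the first stage of the $y$-system. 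Applying Lemma~\ref{mapsagreelemma} to $y_1,\dots,y_d$ therefore presents this map as the composition of the connecting homomorphisms
\[
\delta_t=\delta^{M_t}_{x_t^s}\colon H_\fm^{t-1}(M_t/x_t^s M_t)\to H_\fm^{t}(M_t),\qquad t=1,\dots,d,
\]
where $M_t:=R/(x_{t+1}^s,\dots,x_d^s)$ (so that $M_t/x_t^s M_t=R/(x_t^s,\dots,x_d^s)$ and $M_d=R$), and where the starting term is $H_\fm^0(R/(x_1^s,\dots,x_d^s))=R/(x_1^s,\dots,x_d^s)$.

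Next I would apply Lemma~\ref{topcase} at each stage, taking $M=M_t$, parameter $x=x_t$, and exponent $s$, so that the map produced there is precisely $\delta^{M_t}_{x_t^s}=\delta_t$. Since $x_{t+1},\dots,x_d$ is part of a system of parameters, $\dim M_t=t>0$ for $t=1,\dots,d$, as Lemma~\ref{topcase} requires. The single hypothesis left to check is $x_t\in\ann_R H_\fm^{t-1}(M_t)=\fa_{t-1}(M_t)$, and this is exactly where the $p_s$-standard condition is used: for $1\le t\le d-1$ Definition~\ref{ps_standard} gives $x_t\in\fa(R/(x_{t+1}^s,\dots,x_d^s))=\fa(M_t)$, and because $\dim M_t=t$ the product $\fa(M_t)=\fa_0(M_t)\cdots\fa_{t-1}(M_t)$ lies in its top factor $\fa_{t-1}(M_t)$; for $t=d$ the condition $x_d\in\fa(R)$ likewise gives $x_d\in\fa_{d-1}(R)$. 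As $s\ge 2$, Lemma~\ref{topcase} then shows that each $\Soc\delta_t$ is a split surjection.

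Finally, a composition of split surjections is again a split surjection (compose the sections in the reverse order), so the socle of the full composition---that is, the socle of the canonical map $R/(x_1^s,\dots,x_d^s)\to H_\fm^d(R)$---is a split surjection, which is the first assertion. For the second assertion, fix $i\ge s$ and note that the canonical map out of the $s$-th stage factors as the canonical map out of the $i$-th stage precomposed with a transition map of the direct system; applying $\Soc(-)$ shows that surjectivity at stage $s$ forces surjectivity at stage $i$. The main obstacle I anticipate is bookkeeping rather than a genuine difficulty: one must correctly match the connecting map $\delta_t$ coming from Lemma~\ref{mapsagreelemma} for the reindexed system $y_i=x_i^s$ with the split-surjection statement of Lemma~\ref{topcase} (tracking which module is $M$, which element is the parameter, and that the exponent is $s\ge 2$), and one must carry out the cofinality check that the two colimit presentations and their canonical maps agree. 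The annihilator membership drawn from Definition~\ref{ps_standard}, though conceptually the crux, reduces to the containment $\fa(M_t)\subseteq\fa_{t-1}(M_t)$.
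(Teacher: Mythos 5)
Your proposal is correct and follows essentially the same route as the paper: decompose the canonical map via Lemma~\ref{mapsagreelemma} applied to the system $x_1^s,\dots,x_d^s$ into the connecting homomorphisms $\delta^{M_t}_{x_t^s}$, verify the hypothesis $x_t\in\ann_R H_\fm^{t-1}(M_t)$ from the $p_s$-standard condition via $\fa(M_t)\subseteq\fa_{t-1}(M_t)$, and apply Lemma~\ref{topcase} to each stage. Your explicit cofinality check identifying $\dirlim_j R/(x_1^{sj},\dots,x_d^{sj})$ with $\dirlim_j R/(x_1^j,\dots,x_d^j)$ is a detail the paper leaves implicit, but otherwise the arguments coincide.
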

\begin{proof}
Suppose $x_1,...,x_d$ is a $p_s$-standard system of parameters of $R$ for some $s\geq 2$. By definition, we have $x_t\in \fa(R/(x_{t+1}^s,...,x_d^s)) \subseteq \ann_RH_\fm^{t-1}(R/(x_{t+1}^s,...,x_d^s))$ for each $t=1,...,d$, and so Lemma \ref{topcase} yields that the induced map
\[\xymatrix{
\Soc H_\fm^{t-1}({R}/{(x_t^s,x_{t+1}^s,...,x_d^s)}) \ar[r] & \Soc H_\fm^t({R}/{(x_{t+1}^s,...,x_d^s)})
}\]
is a split surjection for each $t=1,...,d$.  Thus the composition 
\[\xymatrix@C=1em{
H_{\fm}^{0}(R/(x_{1}^s,...,x_d^s))\ar[r] & H_{\fm}^{1}(R/(x_{2}^s,...,x_d^s)) \ar[r] &   \cdots \ar[r] & H_\fm^{d-1}({R}/{(x_d^s)}) \ar[r] & H_\fm^d(R),
}\]
which agrees with the canonical map $R/(x_1^s,...,x_d^s) \to \dirlim_j R/(x_1^j,...,x_d^j)\cong H_\fm^d(R)$ by Lemma \ref{mapsagreelemma}, induces the desired split surjection on socles.

The final remark follows from the definition of the direct system.
\end{proof}

\begin{rmk}\label{detmaps}
Let $y_1,...,y_d$ and $x_1,...,x_d$ be systems of parameters of $R$ such that $(y_1,...,y_d)\subseteq (x_1,...,x_d)$. Suppose $A=(a_{ij})$ and $B=(b_{ij})$ are matrices such that $y_i=\sum_{j=1}^d a_{ij}x_j=\sum_{j=1}^d b_{ij}x_j$. One has $\det A\cdot (x_1,...,x_d)\subseteq (y_1,...,y_d)$, thus multiplication by $\det A$ induces a well-defined map $R/(x_1,...,x_d)\xrightarrow{\det A}R/(y_1,...,y_d)$; see \cite[p. 2681]{FH11}. Moreover, the proof of \cite[Corollary 2.5]{FH11} shows that multiplication by either $\det A$ or $\det B$ determines the same map from $R/(x_1,...,x_d)$ to $R/(y_1,...,y_d)$.
\end{rmk}

After a reduction in order to apply Proposition \ref{simple_surjection} in place of \cite[Proposition 2.5]{MRS08}, the proof of the next result is similar to that of \cite[Theorem 2.7]{MRS08}. Let $s\geq 1$ be an integer and recall from above that a $p_s$-standard system of parameters exists if $R$ has a dualizing complex, for example if $R$ is complete; see also Remark \ref{CuongCuong}.
\begin{thm}\label{Gor_char_higherdim}
Suppose there exists a $p_s$-standard system of parameters $x_1,...,x_d$ of $R$ for some $s\geq 2$. The ring $R$ is Gorenstein if and only if some parameter ideal contained in $(x_1^s,...,x_d^s)$ is irreducible.
\end{thm}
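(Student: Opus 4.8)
The forward implication is immediate: if $R$ is Gorenstein then every parameter ideal of $R$ is irreducible, so the parameter ideal $(x_1^s,\dots,x_d^s)$ is itself an irreducible parameter ideal contained in $(x_1^s,\dots,x_d^s)$.

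For the converse, suppose $\fq=(y_1,\dots,y_d)\subseteq(x_1^s,\dots,x_d^s)$ is an irreducible parameter ideal; the goal is to deduce that $R$ is Gorenstein. The first step is to transport the surjectivity supplied by Proposition~\ref{simple_surjection} from $R/(x_1^s,\dots,x_d^s)$ to $R/\fq$. Writing $y_i=\sum_j a_{ij}x_j^s$ and $A=(a_{ij})$, Remark~\ref{detmaps} furnishes a map $R/(x_1^s,\dots,x_d^s)\xrightarrow{\det A}R/\fq$, and the key compatibility to verify is that it fits into a commutative triangle with the canonical maps $\pi\colon R/(x_1^s,\dots,x_d^s)\to H_\fm^d(R)$ and $\rho\colon R/\fq\to H_\fm^d(R)$ to the direct limit, that is, $\rho\circ(\det A)=\pi$. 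This is precisely the sense in which the determinant maps implement the structure maps of the direct system computing $H_\fm^d(R)$; I would justify it using \cite{FH11} together with Remark~\ref{detmaps}. Applying $\Soc(-)$ and using that $\Soc(\pi)$ is surjective by Proposition~\ref{simple_surjection}, the identity $\Soc(\rho)\circ\Soc(\det A)=\Soc(\pi)$ shows that $\Soc(\rho)\colon\Soc(R/\fq)\to\Soc H_\fm^d(R)$ is surjective.

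I would then read off the consequences of irreducibility. Since $\fq$ is irreducible, $R/\fq$ is an Artinian local ring with one-dimensional socle, so surjectivity of $\Soc(\rho)$ gives $\dim_k\Soc H_\fm^d(R)\le 1$; as $H_\fm^d(R)\ne 0$ by Grothendieck nonvanishing, in fact $\dim_k\Soc H_\fm^d(R)=1$ and $\Soc(\rho)$ is an isomorphism. Because $R/\fq$ has simple socle, every nonzero submodule contains $\Soc(R/\fq)$, so injectivity of $\Soc(\rho)$ upgrades to injectivity of $\rho$ itself; equivalently, $\fq$ coincides with its limit closure.

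The final, and hardest, step is to conclude that $R$ is Gorenstein, following the endgame of \cite[Theorem~2.7]{MRS08}. Passing to the completion (which affects neither the Gorenstein property, nor irreducibility of $\fq$, nor $\dim_k\Soc H_\fm^d(R)$), Matlis duality turns the injection $\rho$ into a surjection $H_\fm^d(R)^\vee\twoheadrightarrow(R/\fq)^\vee\cong R/\fq$ from the canonical module onto the self-dual Artinian Gorenstein ring $R/\fq$, while $\dim_k\Soc H_\fm^d(R)=1$ says this canonical module is cyclic. The crux is to promote this to the Gorenstein property, and the main obstacle is Cohen--Macaulayness: one must show that the equality of $\fq$ with its limit closure forces the lower local cohomology modules $H_\fm^i(R)$ with $i<d$ to vanish. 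Here is where the $p_s$-standard hypothesis on $x_1,\dots,x_d$ must be used---just as \cite[Theorem~2.7]{MRS08} exploits how deeply $\fq$ sits in $\fm$---and I expect this Cohen--Macaulayness step to be the most delicate point. Once $R$ is Cohen--Macaulay, $\dim_k\Soc H_\fm^d(R)=1$ is exactly the statement that $R$ has type one, whence $R$ is Gorenstein.
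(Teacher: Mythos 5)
Your forward direction and the first two stages of the converse are essentially the paper's argument: you transport the socle surjectivity of Proposition~\ref{simple_surjection} from $R/(x_1^s,\dots,x_d^s)$ to $R/\fq$ via the determinant maps of Remark~\ref{detmaps}, and you correctly deduce from irreducibility that $\Soc(\rho)$ is an isomorphism and hence that $\rho$ is injective. (One caveat on the transport step: the identity $\rho\circ(\det A)=\pi$ on the nose requires checking compatibility with the chosen isomorphisms onto $H_\fm^d(R)$; the paper avoids this by interleaving the two direct systems $\{R/(x_1^{is},\dots,x_d^{is})\}$ and $\{R/(y_1^{t_i},\dots,y_d^{t_i})\}$ with determinant maps, so that the induced maps on direct limits are automatically isomorphisms and only commutativity of a square up to isomorphism is needed. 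This is repairable, but your version as stated asserts more than Remark~\ref{detmaps} gives you.)

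The genuine gap is your final step. You stop at ``one must show that the equality of $\fq$ with its limit closure forces the lower local cohomology modules to vanish,'' propose a detour through completion and Matlis duality, and predict that the $p_s$-standard hypothesis is what makes this work. That is both unproven and misdirected: the $p_s$-standard hypothesis is consumed entirely by Proposition~\ref{simple_surjection} (the socle surjectivity) and plays no role in the Cohen--Macaulayness step. What closes the argument is a general fact about limit closures, valid for an arbitrary system of parameters: injectivity of $\rho$ says exactly that $\fq=(y_1,\dots,y_d)$ equals its limit closure $\{y_1,\dots,y_d\}_R^{\operatorname{lim}}=\bigcup_{n\geq 0}\bigl((y_1^{n+1},\dots,y_d^{n+1}):_R y_1^n\cdots y_d^n\bigr)$, since by \cite[Remark 2.2]{MRS08} the kernel of $\rho$ is $\{y_1,\dots,y_d\}_R^{\operatorname{lim}}/(y_1,\dots,y_d)$; and \cite[Proposition 2.3]{MRS08} states that a system of parameters coincides with its limit closure if and only if it is a regular sequence. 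Hence $y_1,\dots,y_d$ is regular, $R$ is Cohen--Macaulay, and $\dim_{R/\fm}\Soc H_\fm^d(R)=1$ says $R$ has type one, so $R$ is Gorenstein. No duality argument and no further use of the $p_s$-standard condition is needed; as written, your proof is missing its concluding lemma.
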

\begin{proof}
Suppose $x_1,...,x_d$ is a $p_s$-standard system of parameters of $R$ for some $s\geq 2$. All parameter ideals in a Gorenstein ring are irreducible \cite[Theorem 18.1]{Mat89}, so it is sufficient to prove the converse. 

Assume $y_1,...,y_d$ is a system of parameters such that $(y_1,...,y_d)$ is irreducible and contained in $(x_1^s,...,x_d^s)$. We first claim that, for the direct system
\[\xymatrix@C=5em{
R/(y_1,...,y_d) \ar[r]^{y_1\cdots y_d} & R/(y_1^2,...,y_d^2) \ar[r]^-{y_1\cdots y_d} & \cdots,
}\]
the canonical map $R/(y_1,...,y_d)\to \dirlim_j R/(y_1^j,...,y_d^j)\cong H_\fm^d(R)$ is surjective when restricted to socles. As $x_1^j,...,x_d^j$ and $y_1^j,...,y_d^j$ are systems of parameters for all $j\geq 1$, there exist families of positive integers $\{1=t_1<t_2<\cdots\}$, $\{u_1<u_2<\cdots\}$, and $\{v_1<v_2<\cdots\}$ such that for each $i\geq 1$ we have containments:
$$(x_1^{is},...,x_d^{is})\supseteq (y_1^{t_i},...,y_d^{t_i})\supseteq (x_1^{u_i},...,x_d^{u_i})\supseteq (y_1^{v_i},...,y_d^{v_i}).$$
By Remark \ref{detmaps}, we obtain maps (coming from determinants of matrices) making the following commutative diagram of direct systems:
\[\xymatrix{
R/(x_1^{s},...,x_d^{s}) \ar[r]^{\sigma_s} \ar[d]_{x_1^s\cdots x_d^s} & R/(y_1^{},...,y_d^{}) \ar[r]^{\tau_s} \ar[d]_{y_1^{t_2-1}\cdots y_d^{t_2-1}} & R/(x_1^{u_1},...,x_d^{u_1}) \ar[r]^{\rho_s} \ar[d]_{x_1^{u_2-u_1}\cdots x_d^{u_2-u_1}} & R/(y_1^{v_1},...,y_d^{v_1})\ar[d]_{y_1^{v_2-v_1}\cdots y_d^{v_2-v_1}}\\
R/(x_1^{2s},...,x_d^{2s}) \ar[r]^{\sigma_{2s}} \ar[d]_-{x_1^s\cdots x_d^s} & R/(y_1^{t_2},...,y_d^{t_2}) \ar[r]^{\tau_{2s}} \ar[d]_-{y_1^{t_3-t_2}\cdots y_d^{t_3-t_2}} & R/(x_1^{u_2},...,x_d^{u_2}) \ar[r]^{\rho_{2s}} \ar[d]_-{x_1^{u_3-u_2}\cdots x_d^{u_3-u_2}} & R/(y_1^{v_2},...,y_d^{v_2})\ar[d]_-{y_1^{v_3-v_2}\cdots y_d^{v_3-v_2}}\\
\vdots & \vdots & \vdots & \vdots
}\]
Moreover, Remark \ref{detmaps} also yields that the compositions of horizontal maps are the familiar ones: $\tau_{is}\sigma_{is}=x_1^{u_i-is}\cdots x_d^{u_i-is}$ and $\rho_{is}\tau_{is}=y_1^{v_i-t_i}\cdots y_d^{v_i-t_i}$, for $i\geq 1$.  The direct limits of all four columns are isomorphic to $H_\fm^d(R)$, and it thus follows from the universal property of direct limits that the induced maps on these direct limits are isomorphisms. Hence we obtain a commutative diagram of canonical maps:
\[\xymatrix@C=4em{
\Soc R/(x_1^s,...,x_d^s) \ar[r]^{\Soc\sigma_s}\ar[d] & \Soc R/(y_1,...,y_d)\ar[d]\\
\Soc H_\fm^d(R) \ar[r]^{\cong} & \Soc H_\fm^d(R)
}\]
The left vertical map is a surjection by Proposition \ref{simple_surjection}, hence it follows that the right vertical map is a surjection as well.

Let $\phi:R/(y_1,...,y_d)\to \dirlim_j R/(y_1^j,...,y_d^j)\cong H_\fm^d(R)$ be the canonical map and proceed as in the proof of \cite[Theorem 2.7]{MRS08}: Recall that the limit closure of $y_1,...,y_d$ is defined as $\{y_1,...,y_d\}_R^{\operatorname{lim}}=\bigcup_{n\geq 0}((y_1^{n+1},...,y_d^{n+1}):_Ry_1^n\cdots y_d^n)$. By \cite[Remark 2.2]{MRS08}, $\ker(\phi)=\{y_1,...,y_d\}_R^{\operatorname{lim}}/(y_1,...,y_d)$.  Applying $\Soc(-)$ to the canonical maps, we obtain the next exact sequence, where surjectivity of the map on the right was shown above:
\[
\xymatrix{
0\ar[r] & \Soc\ker(\phi) \ar[r] & \Soc R/(y_1,...,y_d) \ar[r] & \Soc H_\fm^d(R) \ar[r] & 0.
}\]
Irreducibility of $(y_1,...,y_d)$ yields $\dim_{R/\fm}\Soc R/(y_1,...,y_d)=1$. As $H_\fm^d(R)\not=0$ we obtain $\Soc \ker(\phi)=0$ implying per \cite[Proposition 2.3]{MRS08} that $y_1,...,y_d$ is regular. Thus $R$ is Cohen-Macaulay with $\dim_{R/\fm} \Soc H_\fm^d(R)=1$, hence Gorenstein.
\end{proof}

As an immediate consequence of Theorem \ref{Gor_char_higherdim}, we obtain Theorem \ref{intro2} from the introduction.

\begin{rmk}
If $R$ has finite local cohomologies, in which case there is an integer $n_0>0$ such that $\fm^{n_0} H_\fm^i(R)=0$ for $i<d$ (this is not assumed in the results above), a similar bound can be obtained by using \cite[Corollary 4.3]{CQ11} in conjunction with \cite[Theorem 2.7]{MRS08}; in particular, it follows from these that $R$ is Gorenstein if and only if some parameter ideal contained in $\fm^{2n_0}$ is irreducible. In particular, if $R$ is Buchsbaum so that we have $\fm H_\fm^i(R)=0$ for $i<d$, then $R$ is Gorenstein if and only if some parameter ideal contained in $\fm^2$ is irreducible.
\end{rmk}

\section*{Acknowledgments}
The authors are grateful to Thomas Marley for comments on an early version of this paper and to the anonymous referee for helpful suggestions.


\end{document}